\def\margin_comment#1{\marginpar{\sffamily{\tiny #1\par}\normalfont}}
\tikzset{
  LabelStyle/.style = {minimum width = 2em, 
                        text = red, font = \bfseries },
  VertexStyle/.append style = { inner sep=2pt,
                                font = \Large\bfseries, fill},
  EdgeStyle/.append style = {->, bend left} }
\newtheorem{thm}{Theorem}[section]
\numberwithin{equation}{section} 
\numberwithin{figure}{thm} 
\theoremstyle{plain}
\newtheorem*{thm*}{Theorem}
\theoremstyle{definition}
\theoremstyle{plain}
\newtheorem{thm_A}{Theorem}
\newtheorem*{defn*}{Definition}
\theoremstyle{plain}
\theoremstyle{plain} 
\theoremstyle{plain}
\newtheorem{prop}[thm]{Proposition} 
\theoremstyle{definition}
\newtheorem{ex}[thm]{Example}
\theoremstyle{remark}
\newtheorem{rem}[thm]{Remark}
\theoremstyle{plain}
\theoremstyle{plain}
\newtheorem{cor}[thm]{Corollary}
\theoremstyle{plain}
\newtheorem{lem}[thm]{Lemma}
\newtheorem*{lem*}{Lemma} 
\theoremstyle{definition}
\newtheorem{defn}[thm]{Definition}
\newtheorem*{acknowledgment*}{Addentum}
\theoremstyle{plain}
\newtheorem*{ex*}{Example}
\theoremstyle{plain}
\begin{document}
\pgfdeclarelayer{background}
\pgfsetlayers{background,main}

\title[HS conjecture and convex polygons]{Herzog-Sch\"onheim conjecture,  vanishing sums of roots of unity and convex polygons}

\author{Fabienne Chouraqui}

\date{}

\maketitle
\begin{abstract}
Let $G$ be a group and $H_1$,\ldots,$H_s$ be subgroups of $G$ of  indices $d_1,\ldots,d_s$ respectively. In 1974, M. Herzog and J. Sch\"onheim conjectured that if $\{H_i\alpha_i\}_{i=1}^{i=s}$,  $\alpha_i\in G$, is a coset partition of $G$, then $d_1,\ldots,d_s$ cannot be distinct. In this paper, we  present the conjecture as a problem on vanishing sum of roots of unity and convex polygons and prove some results using this approach.
\end{abstract}
\maketitle
Keywords:  Schreier coset  automata, Schreier graphs, Free groups, the Herzog-Sch\"onheim conjecture, Vanishing sums of roots of unity.
\section{Introduction}

Let $G$ be a group, $s$ a natural number,  and $H_1$,...,$H_s$ be subgroups of $G$.  If there exist  $\alpha_i\in G$ such that $G= \bigcup\limits_{i=1}^{i=s}H_i\alpha_i$, and the sets  $H_i\alpha_i$, $1 \leq i \leq s$,  are pairwise disjoint, then  $\{H_i\alpha_i\}_{i=1}^{i=s}$ is \emph{a coset partition of $G$}  (or a \emph{disjoint cover of $G$}). In this case,    all the subgroups  $H_1$,...,$H_s$ can be assumed to be of  finite index in  $G$ \cite{newman,korec}. We denote by $d_1$,...,$d_s$ the indices of $H_1$,...,$H_s$ respectively. The coset partition $\{H_i\alpha_i\}_{i=1}^{i=s}$ has  \emph{multiplicity} if $d_i=d_j$ for some $i \neq j$. The  Herzog-Sch\"onheim conjecture is true for the group $G$, if any coset partition of $G$ has multiplicity.

\setlength\parindent{10pt}	If $G$ is the infinite cyclic group $\mathbb{Z}$, a coset partition of $\mathbb{Z}$ is 
$\{d_i\mathbb{Z} +r_i\}_{i=1}^{i=s}$, $r_i \in \mathbb{Z}$,  with  
each $d_i\mathbb{Z} +r_i$ the residue class of $r_i$ modulo $d_i$.
These coset partitions of $\mathbb{Z}$ were first introduced by P. Erd\H{o}s \cite{erdos1} and he conjectured that if $\{d_i\mathbb{Z} +r_i\}_{i=1}^{i=s}$, $r_i \in \mathbb{Z}$, is a  coset partition of $\mathbb{Z}$, then  the largest index $d_s$ appears at least twice.  Erd\H{o}s' conjecture was 
proved independently by H. Davenport and R.Rado, and independently by  L. Mirsky and  D. Newman  using analysis of complex function \cite{erdos2,newman,znam}. Furthermore, it was proved that  the largest index $d_s$ appears at least $p$ times,  where $p$ is the smallest prime dividing $d_s$ \cite{newman,znam,sun2}, that each index $d_i$  divides another index $d_j$, $j\neq i$, and  that each index $d_k$ that does not properly divide any other index  appears at least twice \cite{znam}. We refer also to \cite{por1,por2,por3,por4,sun3} for more details on  coset partitions of $\mathbb{Z}$ (also called covers of $\mathbb{Z}$ by arithmetic progressions) and to \cite{ginosar} for a proof of the Erd\H{o}s' conjecture using group representations. 

In 1974, M. Herzog and J. Sch\"onheim extended Erd\H{o}s' conjecture for arbitrary groups and conjectured that if $\{H_i\alpha_i\}_{i=1}^{i=s}$,  $\alpha_i\in G$, is a  coset partition of $G$, then $d_1$,..,$d_s$ cannot be distinct. In the 1980's, in a series of papers,  M.A. Berger, A. Felzenbaum and A.S. Fraenkel studied  the Herzog-Sch\"onheim conjecture \cite{berger1, berger2,berger3} and in \cite{berger4} they proved the conjecture is true for the pyramidal groups, a subclass of the finite solvable groups. Coset partitions of finite groups with additional assumptions on the subgroups of the partition have been extensively studied. We refer to \cite{brodie,tomkinson1, tomkinson2,sun}. In \cite{schnabel}, the authors very recently proved that the conjecture is true for all groups of order less than $1440$. 

	The common approach to the Herzog-Sch\"onheim conjecture is to study it in finite groups. Indeed, given any group $G$, every coset partition of $G$ induces a coset partition of a particular finite  quotient group of $G$ with the same indices (the quotient of $G$  by the intersection of the normal cores of the subgroups from the partition)  \cite{korec}. In  \cite{chou_hs,chou-space,chou-davenport}, we adopt a completely  different approach to the   Herzog-Sch\"onheim conjecture and  in this paper,  we develop and deepen it further. Instead of  finite groups,  we consider  free groups of finite rank and we develop new tools to the problem that permit us to give conditions on the coset partition of the free group that ensure it has multiplicity. This approach has the  advantage that  it permits to  obtain results on  the   Herzog-Sch\"onheim conjecture in both the  free groups of finite rank and all the finitely generated  groups.  Indeed,  any coset partition of a finitely generated  group  $G$ induces a  coset partition of a free group with the same indices \cite{chou_hs}. \\

In order  to  study the   Herzog-Sch\"onheim conjecture in  free groups of finite rank, we  use the machinery of covering spaces. The  fundamental group of the  bouquet with $n$ leaves (or the wedge sum of $n$ circles),  $X$,  is $F_n$, the  free group of finite rank $n$ freely generated by $A=\{a_1,...,a_n\}$. As  $X$ is a ``good'' space (connected, locally path connected and semilocally $1$-connected), $X$ has a  universal covering space which can be identified with the Cayley graph of $F_n$,  an infinite simplicial tree. Furthermore, there exists a  one-to-one correspondence between the subgroups of $F_n$ and the covering spaces (together with a chosen point) of $X$.

  For any  subgroup $H$ of $F_n$ of finite index $d$, there exists  a $d$-sheeted covering space  $(\tilde{X}_H,p)$  with a fixed basepoint. The underlying graph of $(\tilde{X}_H,p)$ is a directed labelled graph with $d$ vertices. We call it  \emph{the  Schreier graph of $H$} and denote it by $\tilde{X}_{H}$. It can be seen also as  a finite complete bi-deterministic automaton; fixing the start and the end state at the basepoint, it recognises the set of elements in $H$.  It is   called \emph{the Schreier coset diagram  for $F_n$ relative to the subgroup  $H$} \cite[p.107]{stilwell} or  \emph{the Schreier automaton for $F_n$ relative to the subgroup $H$} \cite[p.102]{sims}. The $d$ vertices (or states) correspond to the $d$ right cosets of $H$,  each edge (or transition) $Hg \xrightarrow{a}Hga$, $g \in F_n$, $a$ a generator of $F_n$,  describes the right action of $a$ on  $Hg$. If we fix the start state at  $H$, the basepoint,  and the end state at another vertex  $Hg$, where $g$  denotes the label of some path from the start state to the end state, then this automaton recognises the set of elements in $Hg$ and we call it  \emph{the  Schreier automaton  of $Hg$} and denote it by $\tilde{X}_{Hg}$.\\

In general, for any  automaton $M$, with alphabet $\Sigma$, and $d$ states (or a directed graph with $d$ vertices),  there exists a  square matrix $A$ of order $d\times d$, with $a_{ij}$ equal to the number of directed edges from  vertex $i$ to vertex $j$, $1\leq i,j\leq d$. This matrix is   non-negative and it is  called  \emph{the transition matrix} \cite{epstein-zwik}. If for every $1\leq i,j\leq d$, there exists $m_{ij} \in \mathbb{Z}^+$ such that $(A^{m_{ij}})_{ij}>0$, the matrix is \emph{irreducible}. For an irreducible non-negative matrix  $A$,  \emph{the period of $A$} is  the gcd of all $m \in \mathbb{Z}^+$ such that $(A^m)_{ii} >0$ (for any $i$).  If  $i$ and $j$ denote respectively the start and end  states of $M$, then the number of  words of length $k$ (in the alphabet $\Sigma$) accepted by $M$ is $a_k=(A^k)_{ij}$. \emph{The generating function of $M$} is defined by  $p(z)=\sum\limits_{k=0}^{k=\infty}a_k\,z^k$.  It is a rational function: the fraction of two polynomials in $z$ with integer coefficients \cite{epstein-zwik}, \cite[p.575]{stanley}.\\
 
 In \cite{chou-davenport}, we initiate the study of  the transition matrices and  generating functions of the Schreier automata in the context of coset partitions of the free group.  Let $F_n=\langle \Sigma\rangle$, and $\Sigma^*$ the free monoid  generated by $\Sigma$. Let $\{H_i\alpha_i\}_{i=1}^{i=s}$ be a coset  partition of $F_n$  with $H_i<F_n$ of index $d_i>1$, $\alpha_i \in F_n$, $1 \leq i \leq s$. Let $\tilde{X}_{i}$ denote the  Schreier  graph  of $H_i$, with transition matrix $A_i$ of period $h_i\geq 1$ and $\tilde{X}_{H_i\alpha_i}$  the  Schreier  automaton of $H_i\alpha_i$, with generating function  $p_i(z)$, $1 \leq i\leq s$. For each $\tilde{X}_{i}$,  $A_i$ is a non-negative irreducible matrix and $a_{i,k}$, $k \geq 0$, counts the  number of  words of length $k$ that belong to $H_i\alpha_i\cap \Sigma^*$.
 Since $F_n$ is the disjoint union of the sets  $\{H_i\alpha_i\}_{i=1}^{i=s}$, each element  in $\Sigma^*$ belongs to one and exactly one such set, so $n^k$, the number of  words of length $k$ in $\Sigma^*$, satisfies $n^k=\sum\limits_{i=1}^{i=s}a_{i,k}$, for every $k \geq 0$, and moreover $\sum\limits_{k=0}^{k=\infty}n^k\,z^k=\sum\limits_{i=1}^{i=s}p_i(z)$.  \\
 
 Using this kind of counting argument, we prove that if $h=max\{h_i \mid 1 \leq i \leq s\}$ is greater than $1$, then there is a repetition of the maximal  period $h>1$ and   that,  under certain conditions, the coset partition has multiplicity  \cite{chou-davenport}. Furthermore, we recover the  Davenport-Rado result (or Mirsky-Newman result) for the Erd\H{o}s' conjecture and some of its consequences. Indeed, for every index $d$, the Schreier graph of $d\mathbb{Z}$ has a transition matrix with period equal to $d$, so a repetition of the period is equivalent to a repetition of the index. For the  unique subgroup $H$ of $\mathbb{Z}$ of  index $d$,  its Schreier graph  $\tilde{X}_{H}$ is a closed directed path of length $d$ (with each edge labelled $1$). So, its  transition matrix   $A$ is the permutation matrix corresponding to the $d-$cycle $(1,2,...,d)$, and it has   period $d$.  In particular, the period of $A_s$ is $d_s$, and  there exists $j\neq s$ such that $d_j=d_s$.  Also, if the period (index) $d_k$ of $A_k$ does not properly divide any other period (index),  then  there exists  $j\neq k$ such that $d_j=d_k$. \\

  In this paper, we deepen further our study of  the transition matrices and  generating functions of the Schreier automata in the context of coset partitions of the free group. Indeed,  using elements from the Perron-Frobenius theory of irreducible non-negative matrices, we study the behaviour of the generating functions at some special poles.  
  
     For every $1 \leq i \leq s$, the  transition matrix $A_i$ of  $\tilde{X}_{i}$,  the  Schreier  graph of $H_i<F_n$, is a non-negative and irreducible matrix with  Perron-Frobenius eigenvalue $n$ (the number of free generators of $F_n$).  Indeed,   as the sum of each row and each column in $A_i$  is equal to $n$, $n$ is the positive  simple eigenvalue of maximal absolute value of $A_i$. If $A$ is an irreducible non-negative matrix with period  $h>1$, then  $A$ has exactly $h$  complex simple eigenvalues of maximal absolute value:  $n\,\omega^k$, $\,\,0 \leq k \leq h-1$, where    $\omega=e^{\frac{2\pi i}{h}}$ is the root of unity of order $h$. Moreover, the matrix $A$ is similar to $\omega A$, that is  the spectrum of $A$ is invariant under multiplication by $\omega$. If the matrix $A$ is an irreducible and aperiodic non-negative matrix, then it satisfies many properties similar  to those of the positive matrices. \\
      
    Given a coset  partition of $F_n$, $P=\{H_i\alpha_i\}_{i=1}^{i=s}$, with $H_i<F_n$ of index $d_i>1$, $\alpha_i \in F_n$, and Schreier graph $\tilde{X}_{i}$ with transition matrix $A_i$, and generating function $p_i(z)$,  $1 \leq i\leq s$, we  consider the case of coset partitions with  at least one of the matrices $A_i$ not aperiodic, that is $h=max\{h_i \mid 1 \leq i \leq s\}>1$.  In this case, as said above, $\mid J \mid>1$, where $J=\{j \,\mid\, 1 \leq j\leq s,\,h_j=h\}$ (\cite{chou-davenport}) . 
    For every $j \in J$,    $\{\frac{1}{n}\,\omega^k \mid 0\leq k \leq h-1\}$, is a set of simple poles of $p_j(z)$
    and $\sum\limits_{j\in J}Res(p_j(z), \frac{1}{n}\omega)=0$, since $\sum\limits_{i=1}^{i=s}p_i(z)\,=
\,\sum\limits_{k=0}^{k=\infty}n^k\,z^k\,=\,\frac{1}{1-nz}$ and $Res(\frac{1}{1-nz}, \frac{1}{n}\omega)=0$, where $Res(f(z),z_0)$ denoted the residue of $f(z)$ at $z_0$.  Using our computations  of the residues at these simple poles, the  equation  $\sum\limits_{j\in J}Res(p_j(z), \frac{1}{n}\omega)=0$ is a vanishing sum of roots of unity of order $h$ with positive integer coefficients . Furthermore, we show that the coset  partition $P$ induces  a set of  irreducible vanishing sums of roots of unity of order $h$ with positive integer coefficients  and to each such sum there is an associated convex polygon \cite{mann}. Indeed, we show:

  \begin{thm_A}\label{theo1}
	Let $F_n$ be the free group on $n \geq 2$ generators. Let $P=\{H_i\alpha_i\}_{i=1}^{i=s}$ be a coset  partition with $H_i<F_n$ of index $d_i$, $\alpha_i \in F_n$, $1 \leq i \leq s$, and $1<d_1 \leq ...\leq d_s$.  Let $\tilde{X}_{i}$ denote the  Schreier  graph of $H_i$, with  transition matrix   $A_i$ of period $h_i\geq 1$, $1 \leq i\leq s$.  Assume $h>1$, where  $h=max\{h_i \mid 1 \leq i \leq s\}$.  
	Let $J=\{j \,\mid\, 1 \leq j\leq s,\,h_j=h\}$. Then, 	$P$ induces a    vanishing sum of roots of unity of order $h$:
	\[\sum\limits_{j\in J}(\prod\limits_{\substack{i\in J \\ i\neq j}}d_i)\,(\omega^{})^{m_{j}}\,=\,0\] 
	Furthermore,    $P$  induces a    set of irreducible vanishing sums of roots of unity of order $h$ of the following form, where $J' \subseteq J$:
	\[\sum\limits_{j\in J'}(\prod\limits_{\substack{i\in J '\\ i\neq j}}d_i)\,(\omega^{})^{m_{j}}\,=\,0\] 
	To each irreducible sum, there is associated a convex polygon $\mathcal{P}$ with $\mid J'\mid$ sides.
\end{thm_A}
Using the construction from  Theorem \ref{theo1}, we can translate the HS conjecture as a problem in terms of  planar geometry.
\begin{thm_A}\label{theo2}
	Let $F_n$ be the free group on $n \geq 2$ generators. Let $P=\{H_i\alpha_i\}_{i=1}^{i=s}$ be a coset  partition with $H_i<F_n$ of index $d_i$, $\alpha_i \in F_n$, $1 \leq i \leq s$, and $1<d_1 \leq ...\leq d_s$.  Let $\tilde{X}_{i}$ denote the  Schreier  graph of $H_i$, with  transition matrix   $A_i$ of period $h_i\geq 1$, $1 \leq i\leq s$.  Assume $h>1$, where  $h=max\{h_i \mid 1 \leq i \leq s\}$.  	Then $P$ has multiplicity if and only if  there is an associated  convex polygon $\mathcal{P}$  with at least two edges of the same length. 
\end{thm_A}

Using the construction from Theorem \ref{theo1}, and  general results on  irreducible vanishing sum of roots of unity, we prove, under certain conditions on  $h$, that  a coset partition has multiplicity.

\begin{thm_A}\label{theo3}
	Let $F_n$ be the free group on $n \geq 2$ generators. Let $P=\{H_i\alpha_i\}_{i=1}^{i=s}$ be a coset  partition with $H_i<F_n$ of index $d_i$, $\alpha_i \in F_n$, $1 \leq i \leq s$, and $1<d_1 \leq ...\leq d_s$.  Let $\tilde{X}_{i}$ denote the  Schreier  graph of $H_i$, with  transition matrix   $A_i$ of period $h_i\geq 1$, $1 \leq i\leq s$.  Assume $h>1$, where  $h=max\{h_i \mid 1 \leq i \leq s\}$.
	Then $P$ has multiplicity  in the following cases:
	\begin{enumerate}[(i)]
		\item If $h=p^n$,  where $p$ is a  prime.
		\item If $h=p^nq^m$, where $p,q$ are primes. 
	\end{enumerate}
Furthermore, in these cases, all the associated  convex polygons  are regular.
\end{thm_A}

    The paper is organized as follows.  In  Section $2$, we give some preliminaries on vanishing sums of roots of unity, automata and their generating functions, and non-negative irreducible matrices. In Section $3$, we present some preliminary results on the Schreier automaton of a coset of a subgroup of $F_n$, and on  its  generating function. We prove some properties of  the generating function and in particular we compute the residues  of the generating function  at some special poles. In Section $4$,   we present the irreducible vanishing sum of roots of unity and its associated polygon induced by a coset partition, and  we  prove the main results.    In many places, we write the HS conjecture instead of  the Herzog-Sch\"onheim conjecture.

 \section{Preliminaries}
 \subsection{About vanishing sum of roots of unity and their induced convex polygons}
 We refer the reader to \cite{schonberg}, \cite{mann}, \cite{lenstra}, \cite{lam}.  A \emph{vanishing sum of roots of unity of length $k$} is  an equation $(*)$ of the form 
 $\sum\limits_{j=1}^{j=k}a_j\zeta_j\,=\,0$, where the $a_j$ belong to $\mathbb{C}^*$ and the $\zeta_j$ are roots of unity.  
 In \cite{schonberg} and \cite{mann}, the authors consider such equations with $a_j$ in  $\mathbb{Z}$ and we use their terminology. If the coefficients $a_j$ are positive integers, then $(*)$ can be interpreted as  a convex $k$-sided polygon with integral sides whose angles are rational when measured in degrees. In this case, $(*)$ is also called a \emph{$k$-sided polygon}. The  equation $(*)$ is called \emph{degenerate} if two of the $\zeta_j$ are equal. It is called \emph{irreducible} if there is no relation  $\sum\limits_{j=1}^{j=k}b_j\zeta_j\,=\,0$, $b_j(a_j-b_j)=0$, $1 \leq j \leq k$, where at least one but not all $b_j=0$, that is there is no proper non-empty subsum that vanishes.  It is called \emph{primitive} if $gcd(a_1,..,a_k)=1$ and if there is no non-empty  relation  $\sum\limits_{j=1}^{j=k}b_j\zeta_j\,=\,0$, where  $b_j=0$ for at least one $j$, that is  $k$ is the minimal length of  a vanishing sum with the $\zeta_j$.  It is called \emph{minimal} if there is no non-empty  relation  $\sum\limits_{j=1}^{j=k}b_j\zeta_j\,=\,0$, where $0 \leq b_j\leq a_j$ for every $j$ . Every polygon is a linear combination with positive  coefficients of minimal polygons. A primitive  equation $\sum\limits_{j=1}^{j=k}a_j\zeta_j\,=\,0$ with positive coefficients is \emph{minimal} \cite[Th.3]{mann}. Every $k$-sided polygon may be obtained from a finite set of minimal polygons of $k$ or fewer sides and there is only a finite number of classes of congruent minimal polygons of given side \cite{mann}.
 \begin{thm}\cite[Th.1]{mann}\label{thm-mann}
If  $(*)$ $\sum\limits_{j=1}^{j=k}a_j\zeta_j\,=\,0$, with $a_j \in \mathbb{Z}$,  is irreducible, then there are distinct primes $p_1,p_2,...,p_t$ where $p_1<p_2<...<p_t\leq k$ and $p_1p_2...p_t$-th roots of unity $\eta_j$ such that $\zeta_j=\eta_j\xi$, $1 \leq j \leq k$, $\xi$ any root of unity. Moreover, if $(*)$ is an  irreducible polygon and if we cannot choose $p_t<k$, then we can choose $t=1$. In the latter case all $a_j$ are equal and $(*)$ represents a regular $k$-sided polygon.
 \end{thm}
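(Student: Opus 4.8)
The plan is to prove the structure theorem by working inside the cyclotomic field generated by the $\zeta_j$ and exploiting Galois conjugation together with the single fundamental relation $\Phi_p(\zeta_p)=1+\zeta_p+\cdots+\zeta_p^{p-1}=0$. First I would normalise: let $N$ be the order of the (cyclic) group generated by the ratios $\zeta_i\zeta_j^{-1}$, fix a reference term $\xi=\zeta_{j_0}$, and divide the relation by $\xi$ so that each $\eta_j:=\zeta_j\xi^{-1}$ lies in $\mu_N$ and the $\eta_j$ generate $\mu_N$. Since irreducibility is unaffected by multiplying the whole equation by a root of unity, it then suffices to show that $N$ is squarefree and that every prime dividing $N$ is at most $k$; writing $N=p_1\cdots p_t$ then gives the first assertion, with the $\eta_j$ exactly the asserted $N$-th roots of unity.

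The heart of the argument is the squarefree step. Suppose some prime $p$ has exact power $p^e$ dividing $N$ with $e\ge2$. Using the splitting $\mu_N\cong\mu_{p^e}\times\mu_M$ (with $M=N/p^e$) I write $\eta_j=\omega_j\tau_j$ and consider the Galois automorphism $\sigma$ of $\mathbb{Q}(\mu_N)$ fixing $\mu_M$ and sending $\zeta_{p^e}\mapsto\zeta_{p^e}^{\,1+p^{e-1}}$; this is well defined because $e\ge2$ makes $1+p^{e-1}$ a unit modulo $p^e$, $\sigma$ fixes $\mu_p$, and $\sigma(\eta_j)=\eta_j\,\rho^{\,c_j}$ where $\rho=\zeta_p$ and $c_j$ is the exponent of $\omega_j$ read modulo $p$. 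Applying the powers $\sigma^m$ to the (rationally coefficiented, hence $\sigma$-stable) relation and grouping terms into blocks $V_c=\sum_{c_j\equiv c}a_j\eta_j$ yields $\sum_{c=0}^{p-1}V_c\,\rho^{mc}=0$ for $m=0,\dots,p-1$. The coefficient matrix is a Vandermonde in the distinct $\rho^c$, so $V_c=0$ for every $c$. If two blocks are nonempty this is a proper vanishing subsum, contradicting irreducibility; if only one block is nonempty then the $\mu_{p^e}$-parts of all $\eta_j$ sit in a single coset of $\mu_{p^{e-1}}$, so the ratio group lies in $\mu_{N/p}$, contradicting that $p^e$ is the exact power dividing $N$. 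Hence $N$ is squarefree.

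With $N$ squarefree I bound its primes. For a prime $p\mid N$, use $\mu_N\cong\mu_p\times\mu_{N/p}$ and regroup the relation as $\sum_{c=0}^{p-1}\zeta_p^{\,c}W_c=0$ with $W_c\in\mathbb{Q}(\mu_{N/p})$; since $\{1,\zeta_p,\dots,\zeta_p^{p-2}\}$ is a basis of $\mathbb{Q}(\mu_N)$ over $\mathbb{Q}(\mu_{N/p})$ and $\zeta_p^{p-1}=-(1+\cdots+\zeta_p^{p-2})$, the relation forces all $W_c$ to be equal, say to $W$. Irreducibility rules out $W=0$ exactly as before (a nonempty block would vanish, or all terms would share a common $\mu_p$-component, contradicting $p\mid N$), so $W\neq0$; then every residue class modulo $p$ is represented among the terms and $k\ge p$. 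This produces the primes $p_1<\cdots<p_t\le k$ and finishes the first part.

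For the final assertion, assume positive coefficients and that $p_t=k$ cannot be avoided, so $k$ is prime and equals the largest prime dividing $N$. Applying the previous step with $p=k$: all $k$ residue classes modulo $k$ are nonempty, and since there are only $k$ terms each class is a singleton, whence the common value $W$ equals $a_j\tau_j$ for the unique term in each class, with $\tau_j\in\mu_{N/k}$. Comparing moduli forces all $a_j$ equal, and then comparing values forces all the $\tau_j$ to coincide, so after one further rotation every $\zeta_j$ lies in $\mu_k$; thus $N=k$, i.e. $t=1$, the $k$ terms are precisely the full set of $k$-th roots of unity with a common coefficient $a$, and $(*)$ is the regular $k$-gon $a(1+\zeta_k+\cdots+\zeta_k^{k-1})=0$. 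I expect the squarefree step to be the main obstacle: the correct choice of the conjugation $\sigma$ raising $\zeta_{p^e}$ to the $1+p^{e-1}$ power and the Vandermonde collapse are the non-routine points, and one must carefully track throughout that the ratio group keeps order exactly $N$ so that the "single nonempty block" alternatives really do yield contradictions.
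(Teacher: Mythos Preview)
The paper does not prove this statement at all: Theorem~\ref{thm-mann} is quoted verbatim from Mann's 1965 paper \cite[Th.~1]{mann} as a preliminary result, with no argument supplied. There is therefore nothing in the paper to compare your proposal against.

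That said, your outline is a correct proof of Mann's theorem and is close in spirit to the classical argument. The normalisation (passing to the ratio group $\mu_N$), the squarefree step via the order-$p$ automorphism $\sigma:\zeta_{p^e}\mapsto\zeta_{p^e}^{1+p^{e-1}}$ followed by a Vandermonde collapse, the bound $p\le k$ via the basis $\{1,\zeta_p,\dots,\zeta_p^{p-2}\}$ of $\mathbb{Q}(\mu_N)$ over $\mathbb{Q}(\mu_{N/p})$, and the endgame when $p_t=k$ (singleton blocks, compare moduli, conclude regularity) are all sound as written. The one place to tighten the exposition is the dichotomy in the squarefree step: you should say explicitly that the partition into the blocks $V_c$ is a partition of the index set $\{1,\dots,k\}$, so that ``two nonempty blocks'' really does give a \emph{proper} subsum of the original relation, and that ``one nonempty block'' forces every ratio $\eta_i/\eta_j$ to have $p$-part in $\mu_{p^{e-1}}$, contradicting the definition of $N$ as the exact order of the ratio group. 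With that clarification the argument is complete.
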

 From Theorem \ref{thm-mann},  any irreducible  equation $\sum\limits_{j=1}^{j=k}a_j\zeta_j\,=\,0$ can be transformed into an irreducible equation $\sum\limits_{j=1}^{j=k}a_j\omega^{m_j}\,=\,0$, where $\omega=e^{\frac{2\pi i}{h}}$ is a root of unity of order $h$ with $h$ a divisor of  $p_1p_2...p_t$ (up to rotation by some $\xi$). 
 In \cite{mann}, there is a classification of the primitive and irreducible $k$-sided polygons with $k \leq 7$. Most of them are regular and in the other cases they have at least two edges of the same length.\\

 In \cite{lam}, the authors study the following question:  given a natural number $h$, what are the possible values of the length $k$ of a vanishing sum of roots of unity of order $h$ ? They show that for any $h=p_1^{n_1}p_2^{n_2}...p_t^{n_t}$, where $p_1,..,p_t$ are different primes, $k$ belongs to $\mathbb{N}p_1+...+\mathbb{N}p_t$ \cite[p.92]{lam}. This result implies that any non-empty vanishing sum of $h$-th roots of unity must have length at least $p_1$, where $p_1$ is the smallest prime dividing $h$. Furthermore, they show:
 \begin{thm}\label{lam}\cite[Thm2.2, Cor.3.4]{lam}
 Let  $\sum\limits_{j=1}^{j=k}a_j\omega^{m_j}\,=\,0$ be a vanishing sum of roots of unity of order $h$, $\omega=e^{\frac{2\pi i}{h}}$ , with $a_j \in \mathbb{Z}$. Then
 \begin{enumerate}[(i)]

 \item If $h=p^n$,  where $p$ is a  prime. Then, up to a rotation, the only irreducible vanishing sums of roots of unity are $1+\zeta_p+...+\zeta_p^{p-1}=0$,  where $\zeta_p$ is  the root of unity of order $p$.  
 \item If $h=p^nq^m$, where $p,q$ are primes. Then, up to a rotation, the only irreducible vanishing sums of roots of unity are $1+\zeta_p+...+\zeta_p^{p-1}=0$ and  $1+\zeta_q+...+\zeta_q^{q-1}=0$, where $\zeta_p$ and $\zeta_q$ are the roots of unity of order $p$ and $q$ respectively.  
 \item If $h=p_1^{n_1}p_2^{n_2}...p_t^{n_t}$, where $p_1,..,p_t$ are different primes, then any $\mathbb{Z}$-linear relation among the $h$-th roots of unity can be obtained from the basic relations $1+\zeta_{p_i}+...+\zeta_{p_i}^{p_{i}-1}=0$, $1 \leq i \leq t$, by addition, substraction and rotation.
  \end{enumerate}
 \end{thm}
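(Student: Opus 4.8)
The plan is to reconstruct the argument of \cite{lam}, proving (iii) first, since it is the engine from which (i) and (ii) follow. Identify a vanishing sum $\sum_j a_j\omega^{m_j}=0$ of order $h$ with the element $\mathbf a=(a_g)_{g\in\mathbb Z/h\mathbb Z}$ of the kernel $\mathcal R_h$ of the ring surjection $\mathbb Z[\mathbb Z/h\mathbb Z]\twoheadrightarrow\mathbb Z[\omega]$ sending the standard generator to $\omega$. For each prime $p\mid h$ and each $g_0\in\mathbb Z/h\mathbb Z$, the ``rotated basic relation'' $R_p(g_0):=\sum_{\ell=0}^{p-1}e_{g_0+\ell(h/p)}$ lies in $\mathcal R_h$, since the associated sum of roots of unity is $\omega^{g_0}(1+\zeta_p+\dots+\zeta_p^{p-1})=0$ with $\zeta_p=\omega^{h/p}$; part (iii) is exactly the statement that the $R_{p_i}(g_0)$ generate $\mathcal R_h$ as a $\mathbb Z$-module. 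I would prove this by induction on the number $t$ of prime divisors of $h$, the case $t=0$ (so $h=1$) being vacuous.

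For the inductive step, write $h=p^am$ with $p=p_t$ and $\gcd(p,m)=1$, and use $\mathbb Z/h\mathbb Z\cong\mathbb Z/p^a\mathbb Z\times\mathbb Z/m\mathbb Z$ to index coefficients as $c_{y,x}$ with $\omega^{(y,x)}=\zeta_{p^a}^{y}\zeta_m^{x}$ for a suitable choice of isomorphism. A relation then reads $\sum_{y=0}^{p^a-1}b_y\zeta_{p^a}^{y}=0$ with $b_y:=\sum_x c_{y,x}\zeta_m^{x}\in\mathbb Z[\zeta_m]$. Since $\gcd(p^a,m)=1$, the $p^a$-th cyclotomic polynomial $\Phi_{p^a}$ stays irreducible over $\mathbb Q(\zeta_m)$ and $\{\zeta_{p^a}^{i}:0\le i<\deg\Phi_{p^a}\}$ is a $\mathbb Q(\zeta_m)$-basis of $\mathbb Q(\omega)$; dividing the polynomial $\sum_y b_yX^y$ by the monic $\Phi_{p^a}$ inside $\mathbb Z[\zeta_m][X]$ and using this basis forces the remainder to vanish, i.e.\ $\Phi_{p^a}(X)\mid\sum_y b_yX^y$ in $\mathbb Z[\zeta_m][X]$. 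As $\Phi_{p^a}(X)=1+X^{p^{a-1}}+\dots+X^{(p-1)p^{a-1}}$, comparing coefficients shows $b_y$ depends only on $y\bmod p^{a-1}$; calling the common values $q_k$ and writing $q_k=\sum_x q_{k,x}\zeta_m^{x}$, the coefficient vector splits as $\mathbf a=\mathbf a'+(\mathbf a-\mathbf a')$, where $\mathbf a'=\sum_{k,x}q_{k,x}\,R_p((k,x))$ is a $\mathbb Z$-combination of rotations of $R_p$, while $\mathbf a-\mathbf a'$ restricts on each slice $\{y\}\times\mathbb Z/m\mathbb Z$ to a vanishing relation $\sum_x(c_{y,x}-q_{y\bmod p^{a-1},x})\zeta_m^{x}=0$ among $m$-th roots of unity. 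By the inductive hypothesis applied to $m$ (which has $t-1$ prime divisors), each of these is a $\mathbb Z$-combination of rotations of $R_{p_1},\dots,R_{p_{t-1}}$, and twisting it by $\zeta_{p^a}^{y}$ keeps it one inside $\mathcal R_h$. This proves (iii); the case $t=1$ already gives the bare content of (i), that every order-$p^n$ relation is a $\mathbb Z$-combination of rotations of $R_p$.

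The \emph{irreducibility} assertions in (i) and (ii) require a refinement over the nonnegative cone (de Bruijn's theorem). For $h=p^n$ I would show that every vanishing sum with nonnegative integer coefficients is a \emph{nonnegative}-integer combination of rotations of $R_p$: group the coefficients by $r=g\bmod p^{n-1}$, subtract $\min_{0\le\ell<p}a_{r+\ell p^{n-1}}$ copies of $R_p(r)$ for every $r$ (this keeps the coefficients nonnegative and the sum vanishing), so that afterwards each group contains a zero entry; writing the remainder as $\sum_{r=0}^{p^{n-1}-1}\omega^{r}b_r=0$ with each $b_r$ a nonnegative-integer combination of $1,\zeta_p,\dots,\zeta_p^{p-1}$, the minimal polynomial $X^{p^{n-1}}-\zeta_p$ of $\omega$ over $\mathbb Q(\zeta_p)$ (valid since $\omega^{p^{n-1}}=\zeta_p$ and $[\mathbb Q(\omega):\mathbb Q(\zeta_p)]=p^{n-1}$) makes $\{1,\omega,\dots,\omega^{p^{n-1}-1}\}$ a $\mathbb Q(\zeta_p)$-basis, so each $b_r=0$, and then $b_r$ is a nonnegative vanishing sum of $p$-th roots of unity with fewer than $p$ terms, hence identically $0$. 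So the remainder vanishes, and (after the routine reduction to $\gcd=1$) an irreducible sum of order $p^n$ is a single rotation of $1+\zeta_p+\dots+\zeta_p^{p-1}=0$, which is (i). Part (ii) goes the same way: push this nonnegativity bookkeeping through one peeling step of the argument for (iii) in $h=p^nq^m$ and handle the residual $q^m$-part by the $p\leftrightarrow q$ version, concluding that every nonnegative vanishing sum of order $p^nq^m$ is a nonnegative combination of rotations of $R_p$ and $R_q$, so that an irreducible one is a single rotation of one of the two basic relations.

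The main obstacle is parts (i)--(ii), not (iii): the peeling reduction is exact over $\mathbb Z$, but it must be run while \emph{preserving nonnegativity} of the coefficients and controlling which rotations of the basic relations occur, with which nonnegative multiplicities, all the way through the two-prime case. This is precisely where $t\le 2$ is essential --- for $t\ge 3$ there exist irreducible vanishing sums that are not of basic type, a phenomenon already visible in Theorem \ref{thm-mann}, which forces a regular polygon only when a single prime is involved. The remaining ingredients --- the cyclotomic identities, the degree and basis computations, and divisibility by monic cyclotomic polynomials --- are routine.
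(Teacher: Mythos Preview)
This theorem is not proved in the paper: it is quoted in the preliminaries with a citation to \cite{lam} and then used as a black box in the proof of Theorem~3. There is therefore no proof in the paper to compare your proposal against; what you have written is a reconstruction of the argument of \cite{lam} itself.

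Your treatment of (iii) and (i) is correct and follows the standard R\'edei--de~Bruijn--Schoenberg/Lam--Leung line. One small remark on (iii): when you write $q_k=\sum_x q_{k,x}\zeta_m^{x}$ you should note that the $\zeta_m^{x}$ for $0\le x<m$ are not $\mathbb Z$-linearly independent, so the integers $q_{k,x}$ are not determined by $q_k$; any integer lift works (for instance $q_{k,x}:=c_{k,x}$), and the rest of the argument is unaffected. Your argument for (i) is de~Bruijn's and is fine.

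The gap is in (ii). You say it ``goes the same way'' by pushing the nonnegativity bookkeeping through one peeling step and then invoking the one-prime case on the residual $q^m$-part. But the peeling decomposition $\mathbf a=\mathbf a'+(\mathbf a-\mathbf a')$ from your proof of (iii) need not have both pieces nonnegative, and the obvious fix fails: taking $q_{k,x}:=\min_{\ell} c_{k+\ell p^{n-1},x}$ does make both pieces nonnegative, but then in general $\sum_x q_{k,x}\zeta_{q^m}^{x}\neq q_k$, so $\mathbf a-\mathbf a'$ is no longer a sum of slice-wise $q^m$-relations and the induction breaks. The actual two-prime argument in \cite{lam} supplies a genuine extra idea here, exploiting the overlap $\mathbb Z\langle R_p\rangle\cap\mathbb Z\langle R_q\rangle$ to absorb the negative coefficients in a way that terminates precisely because only two primes are present. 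You correctly flag that this is where the difficulty lies and that $t\le 2$ is essential, but your sketch does not supply the mechanism that makes it work.
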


\subsection{Automata and generating  function of their language}\label{subsec_automat}
We refer the reader to \cite[p.96]{sims}, \cite[p.7]{epstein}, \cite{pin,pin2}, \cite{epstein-zwik}.
A \emph{finite state automaton} is a quintuple $(S,\Sigma,\mu,Y,s_0)$, where $S$ is a finite set, called the \emph{state set}, $\Sigma$ is a finite set, called the \emph{alphabet}, $\mu:S\times \Sigma \rightarrow S$ is a function, called the \emph{transition function}, $Y$ is a (possibly empty) subset of $S$ called the \emph{accept (or end) states}, and $s_0$ is called the \emph{start state}.  It is a directed  graph with vertices the states and each transition $s \xrightarrow{a} s'$ between states $s$ and $s'$ is an edge with label $a \in \Sigma$. The \emph{label of a path $p$} of length $n$  is the product $a_1a_2..a_n$ of the labels of the edges of $p$.
The  finite state automaton $M=(S,\Sigma,\mu,Y,s_0)$ is \emph{deterministic} if there is only one initial state and each state is the source of exactly one arrow with any given label from  $\Sigma$. In a deterministic automaton, a path is determined by its starting point and its label \cite[p.105]{sims}. It is \emph{co-deterministic} if there is only one final state and each state is the target of exactly one arrow with any given label from  $\Sigma$. The  automaton $M=(S,\Sigma,\mu,Y,s_0)$ is \emph{bi-deterministic} if it is both deterministic and co-deterministic. An automaton $M$ is \emph{complete} if for each state $s\in S$ and for each $a \in \Sigma$, there is exactly one edge from $s$ labelled $a$.
\begin{defn}
Let $M=(S,\Sigma,\mu,Y,s_0)$ be  a finite state automaton. Let $\Sigma^*$ be the free monoid generated by $\Sigma$. Let  $\operatorname{Map}(S,S)$ be  the monoid consisting of all maps from $S$ to $S$. The map $\phi: \Sigma \rightarrow \operatorname{Map}(S,S) $ given by $\mu$ can be extended in a unique way to a monoid homomorphism $\phi: \Sigma^* \rightarrow \operatorname{Map}(S,S)$. The range of this map is a monoid called \emph{the transition monoid of $M$}, which is generated by $\{\phi(a)\mid a\in \Sigma\}$. An element $w \in \Sigma^*$ is \emph{accepted} by $M$ if the corresponding element of $\operatorname{Map}(S,S)$, $\phi(w)$,  takes $s_0$ to an element of the accept states set $Y$. The set $ L\subseteq \Sigma^*$  recognized by $M$ is called \emph{the language accepted by $M$}, denoted by $L(M)$.
\end{defn}
For any directed  graph with $d$ vertices or any finite state  automaton $M$, with alphabet $\Sigma$, and $d$ states,  there exists a  square matrix $A$ of order $d\times d$, with $a_{ij}$ equal to the number of directed edges from  vertex $i$ to vertex $j$, $1\leq i,j\leq d$. This matrix is   non-negative (i.e $a_{ij}\geq 0$) and it is  called  \emph{the transition matrix} (as in \cite{epstein-zwik}) or  \emph{the adjacency matrix} (as in \cite[p.575]{stanley}). For any $k \geq 1$, $(A^k)_{ij}$ is  equal to the number of directed paths of length $k$   from  vertex $i$ to vertex $j$. The function  $p_{i  j}(z)=\sum\limits_{k=0}^{k=\infty}(A^k)_{ij}\,\,z^k\,$ is called \emph{the generating function of $M$} \cite[p.574]{stanley}.  As $\sum\limits_{k=0}^{k=\infty}A^kz^k=(I-zA)^{-1}$, the generating function $p_{ij}(z)$ is equal to  $((I-zA)^{-1})_{ij}$ and it satisfies:
\begin{thm}\cite[p.574]{stanley}\label{theo_genfn_stanley}
The generating function $p_{ij}(z)$ is given by
\[p_{ij}(z)=\frac{(-1)^{i+j}det(I-zA:j,i)}{det(I-zA)}\]
where $(B : j, i)$ denotes the matrix obtained by removing the $j$th row and $i$th column of B, $det(I-zA)$ is the reciprocal polynomial of the characteristic polynomial of $A$. In particular, $p_{ij}(z)$ is a rational function.
\end{thm}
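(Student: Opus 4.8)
The plan is to read off the formula as a direct instance of Cramer's rule, once $p_{ij}(z)$ has been identified with an entry of $(I-zA)^{-1}$. First I would make precise the identity $\sum_{k=0}^{\infty}A^kz^k=(I-zA)^{-1}$ already quoted in the text. Working over the ring of formal power series in $z$ with matrix coefficients (or, in an analytic formulation, for $|z|$ below the reciprocal of the spectral radius of $A$), one computes the telescoping product $(I-zA)\sum_{k=0}^{N}A^kz^k=I-z^{N+1}A^{N+1}$ and passes to the limit $N\to\infty$; since $\det(I-zA)$ has constant term $1$, the matrix $I-zA$ is a unit in the formal power series ring, so the series is indeed its inverse. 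Reading off the $(i,j)$ entry gives $p_{ij}(z)=\big((I-zA)^{-1}\big)_{ij}$.

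Next I would invoke the adjugate formula $M^{-1}=\tfrac{1}{\det M}\operatorname{adj}(M)$, valid for any matrix with invertible determinant, where $(\operatorname{adj}M)_{ij}$ equals the $(j,i)$-cofactor of $M$, namely $(-1)^{i+j}\det(M:j,i)$, with $(M:j,i)$ the matrix obtained from $M$ by deleting row $j$ and column $i$. Applying this with $M=I-zA$ yields exactly
\[p_{ij}(z)=\frac{(-1)^{i+j}\det(I-zA:j,i)}{\det(I-zA)},\]
the interchange of indices in the deleted row and column being precisely the transpose built into $\operatorname{adj}$.

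Then I would record that $\det(I-zA)$ is the reciprocal (reversed) polynomial of the characteristic polynomial $\chi_A(\lambda)=\det(\lambda I-A)$: factoring $z$ out of each of the $d$ columns gives $\det(I-zA)=z^d\det(\tfrac1z I-A)=z^d\chi_A(1/z)$ for $z\neq 0$, so if $\chi_A(\lambda)=\sum_{k=0}^{d}c_k\lambda^k$ with $c_d=1$ then $\det(I-zA)=\sum_{k=0}^{d}c_k z^{d-k}$. Since the numerator and denominator in the displayed formula are polynomials in $z$ (with integer coefficients whenever $A$ has integer entries) and the denominator is nonzero — its value at $z=0$ is $1$ — the function $p_{ij}(z)$ is rational, which is the final assertion of the theorem.

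The argument has no real obstacle; the only points needing a little care are making the interchange of limit in $\sum_{k}A^kz^k=(I-zA)^{-1}$ rigorous — cleanest in the $z$-adic topology on formal power series, or via the radius of convergence $1/\rho(A)$ in the analytic setting — and keeping the cofactor bookkeeping straight, in particular that the deleted row is indexed by $j$ and the deleted column by $i$, opposite to the entry $(i,j)$ being computed.
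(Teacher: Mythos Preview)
Your argument is correct and is the standard proof: identify $p_{ij}(z)$ with the $(i,j)$ entry of $(I-zA)^{-1}$ via the geometric series, then apply the adjugate formula $M^{-1}=\det(M)^{-1}\operatorname{adj}(M)$, and finally observe $\det(I-zA)=z^d\chi_A(1/z)$. The paper does not give its own proof of this statement; it is quoted as a result from Stanley \cite[p.574]{stanley}, so there is nothing to compare against beyond noting that your derivation is the expected one.
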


Note that if   $M$ is a bi-deterministic automaton  with alphabet $\Sigma$, $d$ states, start state $i$, accept state  $j$  and  transition matrix $A$, then   $(A^k)_{ij}$ is  the number of  words of length $k$ in the free monoid $\Sigma^*$  accepted by $M$.
\subsection{Irreducible non-negative matrices}
We refer to \cite[Ch.16]{bellman}, \cite[Ch.8]{meier}, \cite[p.536-551]{karlin}. There is a vast literature on the topic. Let $A$ be a transition matrix  of order $d\times d$ of a directed graph or an automaton with $d$ states, as defined in Section 2.2. The matrix $A$ is a non-negative matrix, that is $a_{ij}\geq 0$ for every  $1 \leq i,j \leq d$.  For any $k \geq 1$, $(A^k)_{ij}$ is  equal to the number of directed paths of length $k$   from  vertex $i$ to vertex $j$. If for every $1\leq i,j\leq d$, there exists $m_{ij} \in \mathbb{Z}^+$ such that $(A^{m_{ij}})_{ij}>0$, the matrix is \emph{irreducible}. This condition is equivalent to the graph being strongly-connected, that is any two vertices are connected by a directed path. For $A$  an irreducible non-negative matrix, \emph{the period of $A$} is  the gcd of all $m \in \mathbb{Z}^+$ such that $(A^m)_{ii} >0$ (for any $i$). If the period is $1$, A is called \emph{aperiodic}. In \cite{meier}, an   irreducible and  aperiodic matrix $A$ is  called \emph{primitive} and the period $h$ is called the \emph{index of imprimitivity}.\\

Let A be an irreducible non-negative  matrix of order $d\times d$ with period $h\geq 1$ and spectral radius $r$. Then the Perron-Frobenius theorem states that $r$ is a positive real number and it is a simple eigenvalue of $A$,  $\lambda_{PF}$,  called the \emph{Perron-Frobenius eigenvalue}. It satisfies $\sum\limits_{i}a_{ij}\leq \lambda_{PF} \leq \sum\limits_{j}a_{ij}$. 
The matrix $A$ has a right eigenvector $v_R$ with eigenvalue $\lambda_{PF}$ whose components are all positive and likewise, a 
left eigenvector $v_L$ with eigenvalue $\lambda_{PF}$ whose components are all positive.  Both right and left eigenspaces associated with $\lambda_{PF}$ are one-dimensional.
 \begin{thm}\cite[Thm. V7]{flajolet}\label{theo_genfn_flajolet}
  Let $A$ be a non-negative and irreducible matrix of order $d\times d$. Let $P(z)=(I-zA)^{-1}$. Then all the entries $p_{ij}(z)$ of $P(z)$ (as given in Theorem \ref{theo_genfn_stanley}) have the same radius of convergence $\frac{1}{\lambda_{PF}}$, where $\lambda_{PF}$ is the Perron-Frobenius eigenvalue of $A$.
  \end{thm}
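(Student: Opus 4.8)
To prove the stated claim — that every entry \(p_{ij}(z)\) of \((I-zA)^{-1}\) has radius of convergence \(1/\lambda_{PF}\) — the plan is to read this radius directly off the explicit rational form of Theorem~\ref{theo_genfn_stanley}, rewritten as \(p_{ij}(z)=(\operatorname{adj}(I-zA))_{ij}/\det(I-zA)\) (the numerator \((-1)^{i+j}\det(I-zA:j,i)\) being, by definition, the \((i,j)\) entry of the classical adjoint), and then to locate and certify the non-vanishing of its smallest pole using the Perron--Frobenius structure of \(A\) recalled above. The inequality ``radius \(\ge 1/\lambda_{PF}\)'' will be immediate and uniform in \(i,j\); all the content lies in the matching bound, namely that \(z_0:=1/\lambda_{PF}\) is actually a pole of \emph{each} \(p_{ij}\), with no accidental cancellation between numerator and denominator.

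First I would establish the lower bound. By Theorem~\ref{theo_genfn_stanley}, \(p_{ij}(z)\) is a ratio of a polynomial over \(\det(I-zA)\), so its poles lie among the zeros of \(\det(I-zA)=\prod_{\lambda}(1-\lambda z)\), where \(\lambda\) runs over the eigenvalues of \(A\). Each such zero is \(1/\lambda\) with \(\lambda\neq0\), hence of modulus \(\ge 1/\lambda_{PF}\) since \(\lambda_{PF}\) is the spectral radius of \(A\). Thus every pole of \(p_{ij}\) has modulus at least \(1/\lambda_{PF}\), so the Taylor series \(\sum_{k\ge0}(A^k)_{ij}z^k\) at \(0\) has radius of convergence at least \(1/\lambda_{PF}\), for all \(i,j\).

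The crux is the reverse inequality. Set \(z_0=1/\lambda_{PF}\) and let \(B(z)=\operatorname{adj}(I-zA)\), a matrix of polynomials with \(p_{ij}(z)=B(z)_{ij}/\det(I-zA)\). Since \(\lambda_{PF}\) is a \emph{simple} eigenvalue of \(A\), the matrix \(I-z_0A=-z_0(A-\lambda_{PF}I)\) has rank exactly \(d-1\), so \(B(z_0)\) has rank \(1\); in particular \(B(z_0)\neq0\). From \((I-z_0A)B(z_0)=B(z_0)(I-z_0A)=\det(I-z_0A)\,I=0\) together with the fact that the right and left \(\lambda_{PF}\)-eigenspaces of \(A\) are one-dimensional, I would deduce \(B(z_0)=c\,v_R v_L^{\top}\) for some scalar \(c\neq0\), with \(v_R,v_L\) the Perron right and left eigenvectors. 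Since \emph{all} coordinates of \(v_R\) and of \(v_L\) are strictly positive, every entry \(B(z_0)_{ij}=c\,(v_R)_i(v_L)_j\) is nonzero. Moreover \(z_0\) is a zero of \(\det(I-zA)\) of multiplicity exactly one: the factor \(1-\lambda_{PF}z\) occurs once (as \(\lambda_{PF}\) is simple), while any further eigenvalue of modulus \(\lambda_{PF}\) — of the form \(\lambda_{PF}\omega^{k}\) with \(k\neq0\), when the period of \(A\) is greater than \(1\) — contributes the distinct zero \(z_0\omega^{-k}\). Hence the polynomial numerator \(B(z)_{ij}\) of \(p_{ij}\) does not vanish at \(z_0\) whereas the denominator has a simple zero there, so \(z_0\) is a simple pole of \emph{every} \(p_{ij}\). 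This forces the radius of convergence to be \(\le 1/\lambda_{PF}\); combined with the previous paragraph, it equals \(1/\lambda_{PF}\) for all \(i,j\).

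The main obstacle, I expect, is precisely this non-cancellation step for the off-diagonal entries: a priori one knows only that \(z_0\) is a pole of \emph{some} entry of \((I-zA)^{-1}\), and it must be upgraded to all of them simultaneously. The rank-one factorisation \(\operatorname{adj}(I-z_0A)=c\,v_R v_L^{\top}\), fed by the strict positivity of \emph{both} Perron eigenvectors, is exactly what achieves this, and it uses nothing beyond Theorem~\ref{theo_genfn_stanley} and the Perron--Frobenius facts already stated. (A more analytic alternative: each \(p_{ij}\) has non-negative Taylor coefficients \((A^k)_{ij}\), so by Pringsheim's theorem its radius of convergence is itself a singularity, and one proves \(\limsup_{k}(A^k)_{ij}^{1/k}=\lambda_{PF}\) by using irreducibility to route directed paths through the fixed pair \((i,j)\); this route is longer, and I would not pursue it.)
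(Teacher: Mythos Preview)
The paper does not prove this statement at all: it is quoted as \cite[Thm.~V7]{flajolet} and used as a black box, with no argument given. So there is no ``paper's own proof'' to compare against.

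Your argument is correct and self-contained. The lower bound on the radius is immediate from the location of the zeros of $\det(I-zA)$. For the upper bound, your rank-one identification $\operatorname{adj}(I-z_0A)=c\,v_Rv_L^{\top}$, combined with the strict positivity of both Perron eigenvectors and the simplicity of $\lambda_{PF}$ as a root of the characteristic polynomial, cleanly rules out cancellation at $z_0=1/\lambda_{PF}$ in every entry simultaneously. This is exactly the standard mechanism (and is essentially how Flajolet--Sedgewick argue as well). The alternative Pringsheim/path-routing approach you sketch at the end would also work, but, as you note, is longer; your adjugate argument is the efficient one.
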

 \begin{rem}\label{rem_periodic}
 The behaviour of irreducible non-negative matrices depends strongly on whether the matrix is aperiodic or not. 
 If $h>1$, then  $A$ has exactly $h$  complex simple eigenvalues with maximal absolute value:  $\lambda_{PF}\,\omega^j$, $\,\,0 \leq j \leq h-1$, where $\omega=e^{\frac{2\pi i}{h}}$ is the root of unity of order $h$. Moreover, the matrix $A$ is similar to $\omega A$, that is  the spectrum of $A$ is invariant under multiplication by $\omega$. Furthermore, the limit $\lim\limits_{k \rightarrow\infty}\frac{1}{k}\sum\limits_{m=0}^{m=k-1}\frac{A^m}{\lambda_{PF}^m}$ exists and is equal to the  $d \times d$ matrix $Q=v_R\,v_L$, with  $v_R$  of order $d \times 1$ and  $v_L$  of order $1 \times d$ such that  $v_L\,v_R=1$. If the matrix $A$ is an irreducible and aperiodic non-negative matrix, then it satisfies many properties similar  to those of the positive matrices.
  \end{rem}
  
\section{Preliminary results}
\subsection{The Schreier automaton of a coset of a subgroup of $F_n$}

We now introduce the particular automata we are interested in, that is \emph{the Schreier coset diagram  for $F_n$ relative to the subgroup  $H$} \cite[p.107]{stilwell} or  \emph{the Schreier automaton for $F_n$ relative to the subgroup $H$} \cite[p.102]{sims}. We refer to \cite{chou_hs} for concrete examples.
\begin{defn}\label{def_Schreier-graph}
 Let $F_n=\langle\Sigma\rangle$ and  $\Sigma^*$ the free monoid generated by $\Sigma$.  Let $H<F_n$ of index $d$. Let $(\tilde{X}_H,p)$ be the covering  of the $n$-leaves bouquet with   vertices  $\tilde{x}_1, \tilde{x}_2,...,\tilde{x}_{d}$ and basepoint $\tilde{x}_i$ for a chosen $ 1\leq i \leq d$. Let  $t_j \in \Sigma^*$ denote the label of a path from $\tilde{x}_i$ to $\tilde{x}_j$. Let $\mathscr{T}=\{1, t_j\mid 1 \leq j\leq d\}$.  Let $\tilde{X}_H$ be the Schreier coset diagram  for $F_n$ relative to the subgroup  $H$,  with $\tilde{x}_i$ representing the subgroup $H$ and the other vertices representing the cosets  $Ht_j$ accordingly.  We call $\tilde{X}_{H}$  \emph{the  Schreier graph  of $H$},  with this correspondence between the vertices   and the cosets  $Ht_j$ accordingly.  
   \end{defn} 
  From its definition, $\tilde{X}_{H}$  is  a strongly-connected graph with $d$ vertices,  so its transition  matrix $A$ is a non-negative and irreducible matrix of order $d \times d$. As $\tilde{X}_{H}$  is a directed $n$-regular graph, the sum of the elements at each row and at each column of  $A$ is equal to $n$. So, from the Perron-Frobenius result for non-negative irreducible matrices, $n$ is the \emph{Perron-Frobenius eigenvalue} of $A$, that is the positive real eigenvalue with maximal absolute value. If $A$ has period $h\geq 1$, then $\{n e^{\frac{2\pi ik}{h}}\mid 0 \leq k\leq h-1\}$ is a set of simple eigenvalues of $A$. Moreover,  $A$ is similar to the matrix $Ae^{\frac{2\pi i}{h}}$, that is the set $\{\lambda e^{\frac{2\pi ik}{h}}\mid 0 \leq k\leq h-1\}$ is a set of eigenvalues of $A$, for each  eigenvalue $\lambda$ of $A$. 
\begin{defn}\label{def_Schreier-automaton}
 Let $F_n=\langle\Sigma\rangle$ and $\Sigma^*$ the free monoid generated by $\Sigma$.  Let $H<F_n$ of index $d$. Let  $\tilde{X}_{H}$  be the  Schreier graph  of $H$. Using the notation from Defn. \ref{def_Schreier-graph}, let $\tilde{x}_i$ be the start state and $\tilde{x}_j$ be the end state for some $1 \leq j \leq d$. We call  the automaton obtained  \emph{the  Schreier automaton   of $Ht_j$}   and denote it by  $\tilde{X}_{Ht_j}$. The language accepted by $\tilde{X}_{Ht_j}$ is  the set of elements in $\Sigma^*$  that belong to $Ht_j$. We call the elements in  $\Sigma^*\cap Ht_j$,  \emph{the positive words in $Ht_j$}. The identity may belong to  this set.
   \end{defn} 
\begin{lem}\label{lem_matrixP}
	Let $H < F_n$ of index $d$, with Schreier graph $\tilde{X}_H$ and transition matrix $A$ with period $h\geq 1$.  Then the following   properties hold:
	\begin{enumerate}[(i)]
		\item $\lambda_{PF}=n$
		\item The vector  $v_R=(1,1,...,1)^T$ of order $1 \times d$ is a right eigenvector of $\lambda_{PF}$ whose components are all positive.
		\item The vector $v_L=\frac{1}{d}(1,1,...,1)$  of order $d \times 1$ is a  left eigenvector of $\lambda_{PF}$ whose components are all positive and such that $v_Lv_R=1$.
		\item  The  matrix $Q=v_R\,v_L$, with  $v_R$  of order $d \times 1$ and  $v_L$  of order $1 \times d$ such that  $v_L\,v_R=1$, is   of order $d\times d$ with all entries equal $\frac{1}{d}$.
		\item If $h=1$, then  $\lim\limits_{k \rightarrow\infty}\frac{A^k}{n^k}=Q$.
		\item If $h>1$, then  $\lim\limits_{k \rightarrow\infty}\frac{1}{k+1}\sum\limits_{m=0}^{m=k}\frac{A^m}{n^m}=Q$.
	\end{enumerate}
\end{lem}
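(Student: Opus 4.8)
The plan is to reduce the whole lemma to one structural observation: the Schreier graph $\tilde{X}_H$ is $n$-regular in the strong sense that every vertex has out-degree \emph{and} in-degree exactly $n$, so that the transition matrix $A$ has all row sums and all column sums equal to $n$. To see this I would recall from Definitions \ref{def_Schreier-graph}--\ref{def_Schreier-automaton} that the Schreier automaton is complete and bi-deterministic: for each generator $a\in\Sigma$ and each coset $Hg$ there is exactly one edge $Hg\xrightarrow{a}Hga$, and the map $Hg\mapsto Hga$ is a bijection of the $d$ cosets, with inverse $Hg\mapsto Hga^{-1}$. Hence the $a$-labelled edges form a permutation of the vertex set; summing over the $n$ generators yields out-degree $n$ and in-degree $n$ at every vertex, i.e. $\sum_{j}a_{ij}=\sum_{i}a_{ij}=n$ for all $i,j$. (Strong connectedness, hence irreducibility of $A$, also follows from the construction, since any two cosets are joined by a directed path.)

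Granting this, parts (ii) and (iii) are immediate: the $i$th entry of $Av_R$ is the $i$th row sum of $A$, so $Av_R=nv_R$; the $j$th entry of $v_LA$ is $\tfrac1d$ times the $j$th column sum of $A$, so $v_LA=nv_L$; both vectors are visibly strictly positive, and $v_Lv_R=\tfrac1d\sum_{i=1}^{d}1=1$. Part (i) then follows from the Perron--Frobenius theorem as quoted in Section 2.3: $A$ is non-negative and irreducible, so $\lambda_{PF}$ is the unique eigenvalue admitting a strictly positive eigenvector, and since $n$ is such an eigenvalue by (ii) we get $\lambda_{PF}=n$; alternatively the inequality $\sum_{i}a_{ij}\le\lambda_{PF}\le\sum_{j}a_{ij}$ forces $\lambda_{PF}=n$ directly. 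Part (iv) is the trivial evaluation of the rank-one outer product, $(v_Rv_L)_{ij}=(v_R)_i(v_L)_j=1\cdot\tfrac1d=\tfrac1d$.

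For (v) and (vi) I would invoke the asymptotics of powers of an irreducible non-negative matrix recorded in Remark \ref{rem_periodic} and the Perron--Frobenius literature cited there. When $h=1$ the matrix $A$ is primitive, and the classical limit theorem for primitive matrices gives $\lim_{k\to\infty}A^k/\lambda_{PF}^k=v_Rv_L$ with $v_R,v_L$ the Perron--Frobenius eigenvectors normalised by $v_Lv_R=1$; by (i)--(iv) this limit is precisely $Q$. When $h>1$ the matrix is not primitive and $A^k/n^k$ need not converge, but Remark \ref{rem_periodic} states that the Ces\`aro averages $\tfrac1k\sum_{m=0}^{k-1}A^m/\lambda_{PF}^m$ converge to $v_Rv_L=Q$; substituting $k\mapsto k+1$ turns this into the asserted identity $\lim_{k\to\infty}\tfrac{1}{k+1}\sum_{m=0}^{k}A^m/n^m=Q$. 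There is essentially no hard step here; the only point needing genuine care is the strong-regularity claim — that the \emph{column} sums, not merely the row sums, of $A$ equal $n$ — which is exactly where the bi-deterministic structure of the Schreier automaton (equivalently, the fact that $F_n$ acts on the cosets by permutations) is used.
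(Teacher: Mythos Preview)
Your proposal is correct and follows essentially the same approach as the paper's own proof: both derive (i)--(iv) from the fact that every row sum and every column sum of $A$ equals $n$, and both obtain (v)--(vi) by citing the standard Perron--Frobenius limit theorems (the paper cites \cite[Ch.~8]{meier}, you invoke Remark \ref{rem_periodic}). The only difference is that you spell out \emph{why} the column sums equal $n$ via the bi-deterministic structure, whereas the paper simply asserts the row/column sum property without further comment.
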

\begin{proof}
	$(i)$, $(ii)$, $(iii)$, $(iv)$  As the sum of every row and every column in $A$ is equal to $n$, $\lambda_{PF}=n$ with right eigenvalue  $v_R=(1,1,...,1)^T$ and left eigenvalue $(1,1,...,1)$. Since $(1,1,...,1)v_R=d$,  $v_L=\frac{1}{d}(1,1,...,1)$  is a left eigenvector that satisfies $v_Lv_R=1$. Computing  $v_R\,v_L$ gives    the matrix $Q$  of order $d\times d$ with all entries equal $\frac{1}{d}$.\\
\noindent	$(v)$, $(vi)$   If $h=1$,  then  $\lim\limits_{k \rightarrow\infty}\frac{A^k}{n^k}=Q$. If $h>1$, $\lim\limits_{k\rightarrow\infty}\frac{1}{k}\sum\limits_{m=0}^{m=k-1}\frac{A^m}{n^m}=Q$ \cite[Ch.8]{meier}.
\end{proof}
\begin{ex}\label{ex-schreier}
	Let $F_2=\langle a,b \rangle$,  $\Sigma=\{a,b\}$.  Let $H= \langle b,a^2,ab^2a,aba^2ba,(ab)^3a \rangle$ be a subgroup of $F_2$ of index $4$, with the following  Schreier graph:
	\begin{figure}[H] 
		\centering \scalebox{0.9}[0.8]{\begin{tikzpicture}
			\SetGraphUnit{4}
			\tikzset{VertexStyle/.append  style={fill}}
			\Vertex[L=$H$, x=-3,y=0]{A}
			\Vertex[L=$Ha$, x=0, y=0]{B}
			
			\Vertex[L=$Hab$, x=3, y=0]{C}
			\Vertex[L=$Haba$, x=6, y=0]{D}
			\Edge[label = a, labelstyle = above](A)(B)
			\Edge[label =a, labelstyle = below](B)(A)
			\Edge[label = b, labelstyle = above](B)(C)
			\Edge[label = b, labelstyle = below](C)(B)
			\Edge[label = b, labelstyle = above](B)(C)
			\Edge[label = a, labelstyle = above](C)(D)
			\Edge[label = a, labelstyle = below](D)(C)
			\Loop[dist = 2cm, dir = NO, label = b, labelstyle = left](A.west)
			\Loop[dist = 2cm, dir = SO, label = b, labelstyle = right](D.east)
			\end{tikzpicture}}
		\caption{The Schreier graph $\tilde{X}_H$ of $H= \langle b,a^2,ab^2a,aba^2ba,(ab)^3a \rangle$ }\label{fig-4sheeted-aperiodic-not_invertible}
	\end{figure}
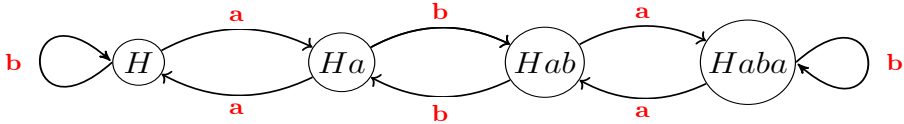

	Let $N=\langle a^2,b^2,ab^2a,aba^2ba,abab\rangle$ be a subgroup of $F_2$ of index $4$, with the following  Schreier graph:
		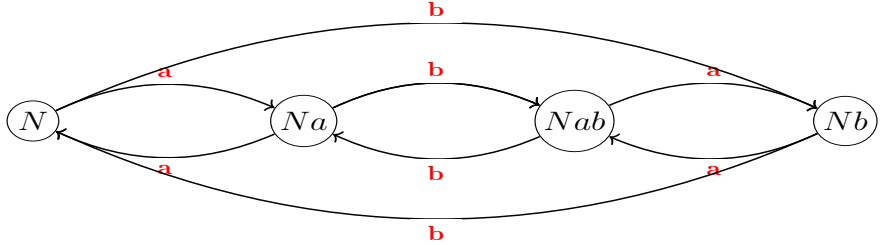
\begin{figure}[H] 
		\centering \scalebox{0.9}[0.7]{\begin{tikzpicture}
			\SetGraphUnit{4}
			\tikzset{VertexStyle/.append  style={fill}}
			\Vertex[L=$N$, x=-6,y=0]{A}
			\Vertex[L=$Na$, x=-2, y=0]{B}
			\Vertex[L=$Nab$, x=2, y=0]{C}
			\Vertex[L=$Nb$, x=6, y=0]{D}
			\Edge[label = a, labelstyle = above](A)(B)
			\Edge[label =a, labelstyle = below](B)(A)
			\Edge[label = b, labelstyle = above](B)(C)
			\Edge[label = b, labelstyle = below](C)(B)
			\Edge[label = b, labelstyle = above](B)(C)
			\Edge[label = a, labelstyle = above](C)(D)
			\Edge[label = a, labelstyle = below](D)(C)
			
			\Edge[label = b, labelstyle = above](A)(D)
			\Edge[label = b, labelstyle = below](D)(A)
			\end{tikzpicture}}\\
		
		\caption{The Schreier graph $\tilde{X}_N$ of $N=\langle a^2,b^2,ab^2a,aba^2ba,abab\rangle$ }\label{fig-4sheeted-period2-not-invertible}
	\end{figure}
	
 The matrices 	$A_H=$  	$ \left( \begin{array}{cccc}
1 & 1 & 0 & 0 \\
1 & 0 & 1 & 0\\
0 &  1 & 0 & 1\\
0 & 0  & 1 & 1
\end{array} \right)$ and $A_N=$	$ \left( \begin{array}{cccc}
0 & 1& 0 & 1 \\
1 & 0 & 1 & 0\\
0 &  1 & 0 & 1\\
1 & 0  & 1 & 0
\end{array} \right)$ are their  respective transition matrices. Both matrices have Perron-Frobenius eigenvalue   $2$ with eigenvectors $v_R=(1,1,1,1)^t$ and  $v_L=\frac{1}{4}(1,1,1,1)$.  Since the period of $A_N$ is $2$,  $A_N$ has simple eigenvalues: $2$,  $-2$,  and  eigenvalue $0$ with multiplicity $2$.  The matrix  $A_H$ is aperiodic.
\end{ex}

\subsection{The generating function of a Schreier automaton}
We prove some properties of the generating function of a Schreier automaton. In the following proposition, we recall several facts proved in \cite{chou-davenport}.
\begin{prop}\cite{chou-davenport}\label{prop-basic-H}
 Let $H < F_n$ of index $d$, with Schreier graph $\tilde{X}_H$ and transition matrix $A$ with period $h\geq 1$. Let $p_{ij}(z)$ denote the generating function of the Schreier automaton, with  $i$ and $j$  the start and end states respectively. Then 
 \begin{enumerate}[(i)]
 \item for every $1 \leq i,j \leq d$, $\frac{1}{n}$ is the radius of convergence of $p_{ij}(z)$. 
\item for every $1 \leq i,j \leq d$, $\{\frac{1}{n} e^{\frac{2\pi ik}{h}}\mid 0 \leq k\leq h-1\}$ is a set of simple poles of $p_{ij}(z)$ of minimal absolute value. 
\item  for $\abs{z}<\frac{1}{n}$, and every $1 \leq i \leq d$, 
$\sum\limits_{j=1}^{j=d}p_{ij}(z)= \frac{1}{1-nz}$.
 \end{enumerate}
\end{prop}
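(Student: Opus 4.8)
The plan is to read all three assertions off the rational‑function presentation of $p_{ij}(z)=\bigl((I-zA)^{-1}\bigr)_{ij}$ supplied by Theorems~\ref{theo_genfn_stanley} and \ref{theo_genfn_flajolet}, together with the Perron--Frobenius data for $A$ recorded in Lemma~\ref{lem_matrixP} and Remark~\ref{rem_periodic}. Throughout I write $\mathbf 1=(1,\dots,1)^{T}$; since every row and every column of $A$ sums to $n$, we have $\lambda_{PF}=n$ with right eigenvector $\mathbf 1$ and left eigenvector $\tfrac1d\mathbf 1^{T}$ (Lemma~\ref{lem_matrixP}(i)--(iii)).

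Parts~(i) and (iii) are quick. For (i), $p_{ij}(z)=\bigl((I-zA)^{-1}\bigr)_{ij}$ has radius of convergence $1/\lambda_{PF}=1/n$ by Theorem~\ref{theo_genfn_flajolet}. For (iii), since $\tilde{X}_{H}$ is $n$-regular the number of length-$k$ paths starting at $i$ is $\sum_{j=1}^{d}(A^{k})_{ij}=n^{k}$, so $\sum_{j}p_{ij}(z)=\sum_{k\ge0}n^{k}z^{k}=\frac{1}{1-nz}$ on $\abs{z}<1/n$; equivalently $A\mathbf 1=n\mathbf 1$ gives $(I-zA)^{-1}\mathbf 1=\frac{1}{1-nz}\mathbf 1$ wherever the inverse exists.

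The content is in (ii). Because $A$ is irreducible of period $h$, Remark~\ref{rem_periodic} gives that the eigenvalues of $A$ of maximal modulus are precisely the simple eigenvalues $n\omega^{k}$, $0\le k\le h-1$, with $\omega=e^{2\pi i/h}$. By Theorem~\ref{theo_genfn_stanley} the denominator of $p_{ij}(z)$ is the reciprocal of the characteristic polynomial of $A$, so the poles of $p_{ij}$ lie among the numbers $1/\lambda$ ($\lambda$ an eigenvalue of $A$), and those of smallest modulus are the simple roots $\tfrac1n\omega^{-k}=\tfrac1n e^{-2\pi ik/h}$, $0\le k\le h-1$ --- which is the same set as $\{\tfrac1n e^{2\pi ik/h}\mid 0\le k\le h-1\}$. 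By (i) there are no poles in $\abs{z}<1/n$, so these are indeed of minimal modulus, \emph{once} we know they are not cancelled by the numerator. To settle non-removability I would compute the matrix residue of $(I-zA)^{-1}$ at $z_{k}=\tfrac1n\omega^{-k}$, which equals $-\tfrac{1}{n\omega^{k}}$ times the spectral projection $E_{k}$ onto the eigenline of $n\omega^{k}$. Using the cyclic block structure of a period-$h$ irreducible matrix --- partition the states into cyclic classes $C_{0},\dots,C_{h-1}$ with $A$ mapping $C_{t}$ into $C_{t+1\bmod h}$ --- and the fact that all row and column sums equal $n$, one checks that the right and left eigenvectors for $n\omega^{k}$ are $v^{(k)}_{x}=\omega^{k\,c(x)}$ and $w^{(k)}_{x}=\omega^{-k\,c(x)}$, where $c(x)$ is the class index of $x$; after normalizing $w^{(k)}v^{(k)}=d$ one gets $(E_{k})_{ij}=\tfrac1d\,\omega^{k(c(i)-c(j))}$, of modulus $\tfrac1d\neq0$. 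Hence every $z_{k}$ is a genuine simple pole of every $p_{ij}(z)$, proving (ii). (For $k=0$ this recovers $E_{0}=Q$ of Lemma~\ref{lem_matrixP}(iv).)

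The one genuinely delicate step is exactly this non-removability in (ii): a priori the numerator $(-1)^{i+j}\det(I-zA:j,i)$ in Theorem~\ref{theo_genfn_stanley} might vanish at some $z_{k}$, and the explicit projection computation is what prevents it --- the all-ones row/column sums force the dominant eigenvectors to be constant on cyclic classes, so the projection entries are unimodular multiples of $\tfrac1d$ and never vanish. (All three statements are also proved in \cite{chou-davenport}, to which one may refer for the computational details.)
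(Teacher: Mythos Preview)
Your argument is correct. Note, however, that the present paper does not itself prove this proposition: it is quoted verbatim from \cite{chou-davenport} and stated without proof, so there is no in-text argument to compare against directly.

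What is worth comparing is your treatment of part~(ii) with the paper's subsequent Lemmas~\ref{lem_res-of-1n}--\ref{lem_mz}, which compute the residues $\operatorname{Res}(p_{ij}(z),\tfrac{1}{n}\omega^{\ell})=-\tfrac{1}{nd}(\omega^{\ell})^{m_{ij}+1}$ by an indirect route: first showing asymptotic equivalence of the $p_{ij}$ at $\tfrac{1}{n}$ via the limiting ratios from Lemma~\ref{lem-result-ijfcs}, then transporting to the rotated poles through the functional identity $p_{ij}(q\omega)=\omega^{m_{ij}}p_{ij}(q)$ of Lemma~\ref{lem_value-m-d-p}. Your spectral-projection computation short-circuits all of this. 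Once you observe that the right and left Perron eigenvectors for $n\omega^{k}$ are $x\mapsto\omega^{k\,c(x)}$ and $x\mapsto\omega^{-k\,c(x)}$ on the cyclic classes (which is exactly where the bi-regularity, i.e.\ all row \emph{and} column sums equal to $n$, is used), the residue of $(I-zA)^{-1}$ at $\tfrac{1}{n}\omega^{-k}$ falls out as $-\tfrac{1}{nd}\,\omega^{k(c(i)-c(j)-1)}$; since $m_{ij}\equiv c(j)-c(i)\pmod h$, this is precisely the formula of Lemma~\ref{lem_mz}. So your approach proves (ii) and Lemma~\ref{lem_mz} in one stroke, and arguably more transparently than the paper's chain of lemmas --- the trade-off being that you import the cyclic-class decomposition of a period-$h$ irreducible matrix as a black box, whereas the paper keeps everything at the level of the power-series identity $p_{ij}(q\omega)=\omega^{m_{ij}}p_{ij}(q)$.
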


 \begin{defn}
For $1 \leq i,j\leq d$, we define  $m_{ij}$,  $0 \leq m_{ij} \leq d$, to  be the  minimal natural number  such that $(A^{ m_{ij}})_{ij} \neq 0$.
 \end{defn} 
By definition,  if $i \neq j$, then $m_{ij}$ is  the  minimal length of a directed  path from   $i$ to $j$ in $\tilde{X}_H$ and if $i=j$, then $m_{ij}=0$. Note that if $H$ is a subgroup of $\mathbb{Z}=\langle 1 \rangle$ of index $d$, its Schreier graph is a directed loop of length $d$ with each edge labelled by $1$ and its transition matrix $A$ is a permutation matrix with  period $d$ and $m_{ij}=\mathcal{r}$, where $d\mathbb{Z}+\mathcal{r}$ is the coset with  $i$ and $j$  the start and end states respectively. \\

Whenever $h>1$,  only for the exponents $m_{ij}+rh$, $r\geq 0$,  $(A^{ m_{ij}+rh})_{ij} \neq 0$, that is  only positive words of length $m_{ij}+rh$ are accepted by the Schreier automaton, with  $i$ and $j$  the start and end states respectively.   The proportion of  words of  length $m_{ij}+rh$,  for very large $r$, accepted by the automaton is computed in \cite{chou-ijfcs}:  \begin{lem}\cite{chou-ijfcs}
	\label{lem-result-ijfcs}
	Let $H < F_n$ be of index $d$, with Schreier graph $\tilde{X}_H$ and transition matrix $A$ with period $h> 1$. Then,  the following   properties hold:
	\begin{enumerate}[(i)]
		\item $(A^{r})_{ij}=0$,  whenever 
		$r \not\equiv  m_{ij}(mod\,h)$,  $1 \leq i,j \leq d$.
		\item  $\lim\limits_{r\rightarrow\infty}\frac{(A^{m_{ij}+rh})_{ij}}{n^{m_{ij}+rh}}=\frac{h}{d}$,   $1 \leq i,j \leq d$.
		\item  $h$ divides $d$. 
			\end{enumerate}
	\end{lem}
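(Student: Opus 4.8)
The plan is to read everything off the \emph{cyclic (Frobenius) decomposition} of the strongly connected graph $\tilde{X}_H$ that accompanies the period $h$. Since $A$ is irreducible with period $h>1$, the standard Perron--Frobenius theory of periodic non-negative matrices \cite[Ch.8]{meier}, \cite[Ch.16]{bellman} partitions the vertex set $\{1,\dots,d\}$ (equivalently, the cosets $Ht_j$) into $h$ non-empty \emph{cyclic classes} $C_0,\dots,C_{h-1}$, together with a map $c\colon\{1,\dots,d\}\to\mathbb{Z}/h\mathbb{Z}$, so that every edge of $\tilde{X}_H$ runs from some class $C_t$ to $C_{t+1\bmod h}$; equivalently, $A_{ij}\neq 0$ forces $c(j)=c(i)+1$. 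It follows that $A^h$ is block diagonal along the classes, its block $B_t:=A^h|_{C_t}$ being a primitive (irreducible and aperiodic) non-negative matrix, and that all the $B_t$ share the spectral radius $\rho(A)^h=n^h$ (their nonzero spectra coincide, being cyclic rearrangements of one and the same product of the ``between-class'' blocks of $A$ taken around the cycle).

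Granting this structural input, items (i) and (iii) are short. For (i): if $(A^r)_{ij}>0$, then by irreducibility $(A^t)_{ji}>0$ for some $t\ge 0$, whence $(A^{r+t})_{ii}>0$ and $(A^{m_{ij}+t})_{ii}>0$; by definition of the period, $h\mid r+t$ and $h\mid m_{ij}+t$, so $r\equiv m_{ij}\pmod h$, the contrapositive of (i). (One also reads this off directly from $c$, since $(A^r)_{ij}\neq 0$ forces $c(j)-c(i)\equiv r$ and $(A^{m_{ij}})_{ij}\neq 0$ forces $c(j)-c(i)\equiv m_{ij}$.) For (iii): $\tilde{X}_H$ is a connected $n$-regular directed graph --- each vertex is the source of exactly $n$ edges and the target of exactly $n$ edges, being a covering of the $n$-leaves bouquet --- so counting the edges from $C_t$ to $C_{t+1}$ in two ways gives $n\,|C_t|=n\,|C_{t+1}|$, since every out-edge of $C_t$ lands in $C_{t+1}$ and every in-edge of $C_{t+1}$ issues from $C_t$. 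Hence all classes have the same size $d/h$, and in particular $h\mid d$.

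For (ii) I would combine (i) with the Cesàro limit of Lemma \ref{lem_matrixP}(vi). By (i), the only exponents contributing to $\sum_{m=0}^{k}\tfrac{(A^m)_{ij}}{n^m}$ are $m=m_{ij}+rh$, $r\ge 0$; writing $b_r:=\tfrac{(A^{m_{ij}+rh})_{ij}}{n^{m_{ij}+rh}}$ and $R_k:=\big\lfloor\tfrac{k-m_{ij}}{h}\big\rfloor$, one has $\frac{1}{k+1}\sum_{m=0}^{k}\tfrac{(A^m)_{ij}}{n^m}=\tfrac{R_k+1}{k+1}\cdot\tfrac{1}{R_k+1}\sum_{r=0}^{R_k}b_r$. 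The left side tends to $Q_{ij}=\tfrac1d$ by Lemma \ref{lem_matrixP}(vi), while $\tfrac{R_k+1}{k+1}\to\tfrac1h$, so the Cesàro means of $(b_r)_r$ converge to $\tfrac hd$. It then remains to upgrade this to genuine convergence of $(b_r)_r$: writing $(A^{m_{ij}+rh})_{ij}=\sum_{l\in C_{c(j)}}(A^{m_{ij}})_{il}\,(B_{c(j)}^{\,r})_{lj}$ and using that $B_{c(j)}$ is primitive with spectral radius $n^h$, the limit $\lim_{r\to\infty}B_{c(j)}^{\,r}/n^{hr}$ exists by the Perron--Frobenius theorem for primitive matrices --- the $h=1$ case of Lemma \ref{lem_matrixP}(v) applied to $\tfrac{1}{n^h}B_{c(j)}$. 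A convergent sequence agrees with its Cesàro limit, so $\lim_{r\to\infty}b_r=\tfrac hd$, which is (ii).

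I expect the only real work to lie in justifying the structural facts used throughout: the cyclic decomposition and, for (ii), that each diagonal block $B_t=A^h|_{C_t}$ is primitive with spectral radius exactly $n^h$. This is the one point where plain edge-counting does not suffice and one must invoke the fine Perron--Frobenius structure of periodic matrices. An alternative that sidesteps the block analysis is spectral: write $A^m=\sum_{k=0}^{h-1}(n\omega^k)^m P_k + O(\mu^m)$ with $\mu<n$ and $P_k$ the (rank-one) spectral projector onto the simple eigenvalue $n\omega^k$; using the explicit similarity $D^{-1}AD=\omega A$ with $D=\operatorname{diag}(\omega^{c(i)})$ one gets $P_k=D^kP_0D^{-k}$, and $P_0=Q$ by Lemma \ref{lem_matrixP}(iv), so $b_r\to\sum_{k=0}^{h-1}\omega^{km_{ij}}(P_k)_{ij}=\sum_{k=0}^{h-1}\omega^{km_{ij}}\omega^{-km_{ij}}\tfrac1d=\tfrac hd$ (using $c(j)-c(i)\equiv m_{ij}\pmod h$). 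Everything else is bookkeeping.
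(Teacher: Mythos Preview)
The paper does not actually supply a proof of this lemma; it is quoted verbatim from \cite{chou-ijfcs} and used as a black box. So there is no in-paper argument to compare against.

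Your proof is correct. The cyclic-class decomposition disposes of (i) and (iii) cleanly, and your treatment of (ii) --- first extracting the Ces\`aro limit $h/d$ from Lemma~\ref{lem_matrixP}(vi) by throwing away the vanishing residue classes, then upgrading to genuine convergence via primitivity of the diagonal blocks of $A^h$ --- is a sound and pleasant argument. The spectral alternative you sketch at the end is also valid: the similarity $D^{-1}AD=\omega A$ with $D=\operatorname{diag}(\omega^{c(i)})$ does give $P_k=D^kP_0D^{-k}$, and $P_0=Q$ since $n$ is simple with the all-ones left and right eigenvectors normalised as in Lemma~\ref{lem_matrixP}; the remainder of the spectrum lies strictly inside $|\lambda|<n$, so the error term is $o(1)$ even if $A$ is not diagonalisable.

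One small quibble: when you invoke ``the $h=1$ case of Lemma~\ref{lem_matrixP}(v)'' for the convergence of $B_{c(j)}^{\,r}/n^{hr}$, that lemma is stated only for the transition matrix of a Schreier graph, not for an arbitrary primitive matrix, so the citation is not literally apt. What you actually need is the general Perron--Frobenius convergence for primitive non-negative matrices, which the paper records in Remark~\ref{rem_periodic} (and which applies here since each $B_t$ has row and column sums $n^h$, hence spectral radius $n^h$). With that adjustment the argument is complete.
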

\begin{ex}\label{ex-mij}
	
	Consider  $\tilde{X}_N$ as described in Figure \ref{fig-4sheeted-period2-not-invertible}. If we consider the Schreier  automaton $\tilde{X}_{Na}$ that accepts the positive words in  the coset $Na$, then $m_{12}=1$.   For $\tilde{X}_{Nab}$,  $m_{13}=2$ and for $\tilde{X}_{Nb}$,  $m_{14}=1$.  Clearly, $m_{11}=0$.
	As, $2,-2,0,0$ are the eigenvalues of $A_N$, $(1-2z), (1+2z),1,1$ are the eigenvalues  of  $I-zA$ and $det(I-zA)=(1-4z^2)$. The generating functions are 
 $p_{11}(z)=\frac{1-2z^2}{1-4z^2}$, 
	 $p_{12}(z)=p_{14}(z)=\frac{z}{1-4z^2}$,  and  $p_{13}(z)=\frac{2z^2}{1-4z^2}$. 
\end{ex}
In the following lemma, we compute the residue of the generating function at the simple pole $\frac{1}{n}$ and show it is a function of $n$ and $d$ only.
\begin{lem}\label{lem_res-of-1n}
	Let $H < F_n$ of index $d$, with Schreier graph $\tilde{X}_H$ and transition matrix $A$ with period $h\geq 1$. Let $p_{ij}(z)$ denote the generating function of the Schreier automaton, with  $i$ and $j$  the start and end states respectively. Then,  
	for every $1 \leq i,j \leq d$, $Res(p_{ij}(z), \frac{1}{n})= -(\frac{1}{nd})$.
\end{lem}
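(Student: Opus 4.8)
The plan is to exploit that, by Proposition~\ref{prop-basic-H}(ii), $z=\frac{1}{n}$ is a \emph{simple} pole of $p_{ij}(z)$; hence in a punctured neighbourhood of $\frac{1}{n}$ one may write $p_{ij}(z)=\frac{c_{ij}}{1-nz}+g_{ij}(z)$ with $g_{ij}$ holomorphic at $\frac{1}{n}$ and $c_{ij}=\lim_{z\to 1/n}(1-nz)\,p_{ij}(z)$. Since $Res(\frac{c_{ij}}{1-nz},\frac{1}{n})=-\frac{c_{ij}}{n}$, the statement reduces to the single claim that $c_{ij}=\frac{1}{d}$ for all $1\le i,j\le d$.

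My first route to $c_{ij}=\frac{1}{d}$ goes through the spectral projection of $A$. Writing $(I-zA)^{-1}=\frac{1}{z}(\frac{1}{z}I-A)^{-1}$ and putting $w=\frac{1}{z}$, I would invoke Lemma~\ref{lem_matrixP}(i)--(iv): $n$ is a simple eigenvalue of $A$, with right eigenvector $v_R=(1,\dots,1)^T$ and left eigenvector $v_L=\frac{1}{d}(1,\dots,1)$ normalised by $v_Lv_R=1$, so the spectral projection of $A$ onto the eigenvalue $n$ is exactly the matrix $Q=v_Rv_L$, all of whose entries equal $\frac{1}{d}$. Hence the resolvent has the Laurent expansion $(wI-A)^{-1}=\frac{Q}{w-n}+(\text{holomorphic at }w=n)$. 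Substituting back $w=\frac{1}{z}$ and using $w-n=\frac{1}{z}-n=-\frac{n}{z}(z-\frac{1}{n})$ gives $p_{ij}(z)=(w(wI-A)^{-1})_{ij}=\frac{-Q_{ij}}{n\,(z-1/n)}+(\text{holomorphic})$, whence $Res(p_{ij}(z),\frac{1}{n})=-\frac{Q_{ij}}{n}=-\frac{1}{nd}$.

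An equivalent route avoids diagonalisation and instead uses Lemma~\ref{lem_matrixP}(v)--(vi). From $p_{ij}(z)=\sum_{k\ge0}(A^k)_{ij}z^k$ one gets, for $z=t/n$ with $0<t<1$, the identity $(1-nz)\,p_{ij}(z)=(1-t)\sum_{k\ge0}\frac{(A^k)_{ij}}{n^k}\,t^k$. Since the Cesàro means $\frac{1}{k+1}\sum_{m=0}^{k}\frac{(A^m)_{ij}}{n^m}$ tend to $Q_{ij}=\frac{1}{d}$ — both in the aperiodic case, where even $\frac{(A^k)_{ij}}{n^k}\to\frac{1}{d}$, and in the periodic case — a standard Abelian theorem (Cesàro summability implies Abel summability to the same limit) gives $\lim_{t\to1^-}(1-t)\sum_{k\ge0}\frac{(A^k)_{ij}}{n^k}t^k=\frac{1}{d}$, i.e. $c_{ij}=\frac{1}{d}$, and again $Res(p_{ij}(z),\frac{1}{n})=-\frac{1}{nd}$.

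I do not expect a genuine obstacle: the substantive content is already packaged in Lemma~\ref{lem_matrixP} and Proposition~\ref{prop-basic-H}, and what remains is the routine bookkeeping of the change of variable $w=1/z$ (or $z=t/n$) and of the signs in the residue computation. The one point requiring a little care is that when $h>1$ only the Cesàro limit of $A^k/n^k$ is available, which is precisely why the argument is routed through the projection $Q$ rather than through $\lim A^k/n^k$. As a numerical check, for the subgroup $N<F_2$ of Example~\ref{ex-mij} one has $n=2$, $d=4$, and indeed $Res(p_{ij}(z),\frac{1}{2})=-\frac{1}{8}$ for all $i,j$.
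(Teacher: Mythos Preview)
Your proposal is correct, and both routes you outline work. The argument in the paper, however, is organised differently. Rather than computing each residue directly, the paper first argues that for fixed $i$ the functions $p_{ij}(z)$ and $p_{ik}(z)$ are asymptotically equivalent as $z\to\frac{1}{n}$ (using the term-by-term asymptotics $(A^{m_{ij}+rh})_{ij}/n^{m_{ij}+rh}\to h/d$ from Lemma~\ref{lem-result-ijfcs} when $h>1$, respectively $(A^r)_{ij}/n^r\to 1/d$ from Lemma~\ref{lem_matrixP}(v) when $h=1$), and hence that all the residues $Res(p_{ij}(z),\frac{1}{n})$, $1\le j\le d$, coincide. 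It then invokes the row-sum identity $\sum_{j=1}^{d}p_{ij}(z)=\frac{1}{1-nz}$ from Proposition~\ref{prop-basic-H}(iii) to obtain $\sum_{j}Res(p_{ij}(z),\frac{1}{n})=-\frac{1}{n}$, and divides by $d$.

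Your first route is more direct and conceptually cleaner: it recognises the computation as the standard fact that the residue of the resolvent $(wI-A)^{-1}$ at a simple eigenvalue is the spectral projection $Q=v_Rv_L$, and then tracks this through the change of variable $w=1/z$. It uses exactly the same input from Lemma~\ref{lem_matrixP} but bypasses the row-sum identity and the asymptotic-equivalence step entirely. Your second route is closer in spirit to the paper's argument (both pass through the limiting behaviour of $(A^k)_{ij}/n^k$), but packages it more rigorously via the Abelian theorem (Ces\`aro $\Rightarrow$ Abel). The paper's route, on the other hand, has the minor advantage of making the identity $\sum_j Res(p_{ij}(z),\frac{1}{n})=-\frac{1}{n}$ explicit, which is used again later in the paper.
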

\begin{proof}
	Let $1 \leq i \leq d$. Clearly, 
	$\lim\limits_{z \rightarrow \frac{1}{n}}\, \frac{p_{ij}(z)}{p_{ik}(z)}$ is a finite number, for any $1 \leq j,k 
	\leq d$,  since $\frac{1}{n}$ is a simple pole of both.
Furthermore, 	  $p_{ij}(z)$ and $p_{ik}(z)$  are asymptotically equivalent,  that is $\lim\limits_{z \rightarrow \frac{1}{n}}\, \frac{p_{ij}(z)}{p_{ik}(z)}=1$, for any $1 \leq j,k \leq d$. Indeed, as $z \rightarrow\frac{1}{n}$, the generating functions  $p_{ij}(z)\rightarrow\infty$ and $p_{ik}(z)\rightarrow\infty$ at the same rate, since $p_{ij}(z)$ and $p_{ik}(z)$  are the generating functions of different cosets of the same subgroup. We prove that formally. If  $h>1$, then
from Lemma \ref{lem-result-ijfcs}$(i)$,  $p_{ij}(z)=	\sum\limits_{r=0}^{r=\infty}\,(A^{m_{ij}+rh})_{ij}\,z^{m_{ij}+rh}$ and  $p_{ik}(z)=	\sum\limits_{r=0}^{r=\infty}\,(A^{m_{ik}+rh})_{ik}\,z^{m_{ik}+rh}$,  for  $z \in D$,  where $D=\{z\in \mathbb{C}\mid  \abs{z}< \frac{1}{n}\}$.  From Lemma \ref{lem-result-ijfcs}$(ii)$, 	$\lim\limits_{r\rightarrow\infty}\frac{(A^{m_{ij}+rh})_{ij}}{n^{m_{ij}+rh}}=\frac{h}{d}$, that is
		$\lim\limits_{z\rightarrow \frac{1}{n}}\lim\limits_{r\rightarrow\infty}(A^{m_{ij}+rh})_{ij}\,z^{m_{ij}+rh}\,=\,\lim\limits_{r\rightarrow\infty}\lim\limits_{z\rightarrow \frac{1}{n}}(A^{m_{ij}+rh})_{ij}\,z^{m_{ij}+rh}=\frac{h}{d}$.  Also, $\lim\limits_{z\rightarrow \frac{1}{n}}\lim\limits_{r\rightarrow\infty}(A^{m_{ik}+rh})_{ik}\,z^{m_{ik}+rh}=\frac{h}{d}$. So,  $\lim\limits_{z \rightarrow \frac{1}{n}}\,\frac{p_{ij}(z)}{ p_{ik}(z)}=1$.
		If  $h=1$,  then   $p_{ij}(z)=	\sum\limits_{r=0}^{r=\infty}\,(A^{r})_{ij}\,z^{r}$ and  $p_{ik}(z)=	\sum\limits_{r=0}^{r=\infty}\,(A^{r})_{ik}\,z^{r}$. 
			From Lemma \ref{lem_matrixP}$(v)$,    $\lim\limits_{z \rightarrow \frac{1}{n}}\lim\limits_{r \rightarrow\infty}(A^r)_{ij}\,z^r\,=\,\lim\limits_{z \rightarrow \frac{1}{n}}\lim\limits_{r \rightarrow\infty}(A^r)_{ik}\,z^r\,=\, \frac{1}{d}$,  for every $1 \leq i,j,k \leq d$. So,  $\lim\limits_{z \rightarrow \frac{1}{n}}\,\frac{p_{ij}(z)}{ p_{ik}(z)}=1$.
	As $\lim\limits_{z \rightarrow \frac{1}{n}}\, \frac{p_{ij}(z)}{p_{ik}(z)}=\frac{Res(p_{ij}(z), \frac{1}{n})}{Res(p_{ik}(z), \frac{1}{n})}$,  $Res(p_{ij}(z), \frac{1}{n})=Res(p_{ik}(z), \frac{1}{n})$.  		From Prop. \ref{prop-basic-H}$(iii)$, 
	 $\sum\limits_{j=1}^{j=d}Res(p_{ij}(z),\frac{1}{n})   =Res(\frac{1}{1-nz}, \frac{1}{n})$, so $\sum\limits_{j=1}^{j=d}Res(p_{ij}(z),\frac{1}{n}) = -(\frac{1}{n})$, that is  $Res(p_{ij}(z), \frac{1}{n})=-\frac{1}{n}(\frac{1}{d})$,  for all $1 \leq j \leq d$. 
\end{proof}
\subsection{The residue of the generating function at  special poles}
In this subsection, we consider $H < F_n$ of index $d$, with Schreier graph $\tilde{X}_H$ and transition matrix $A$ with period $h>1$. Let $p_{ij}(z)$ denote the generating function of the Schreier automaton, with  $i$ and $j$  the start and end states respectively.  In the following lemmas, we compute the residue of $p_{ij}(z)$  at the  $h$ poles of the form $\frac{1}{n}\omega^k$, where  $\omega=e^{\frac{2\pi i}{h}}$, the root of unity of order $h$. It is done in  several steps in the following technical lemmas.
 \begin{lem}\label{lem_value-m-d-p}
 		Let $H < F_n$ of index $d$, with Schreier graph $\tilde{X}_H$ and transition matrix $A$ with period $h>1$. Let $p_{ij}(z)=\frac{M(z)}{D(z)}$  denote the generating function of the Schreier automaton, with  $i$ and $j$  the start and end states respectively, where $M(z)$  is the numerator  and    $D(z)=det(I-zA)$  the denominator.  
 Let $q$ be any rational number, $\abs{q} <\frac{1}{n}$. Then
  \begin{enumerate}[(i)]
  \item  $p_{ij}(q\omega)\,=\,\omega^{m_{ij}}\,\,p_{ij}(q)$.
   \item  $p_{ij}(q\omega^\ell)\,=\,(\omega^{\ell})^{ m_{ij}}\,\,p_{ij}(q)$.
  \item $D(q\omega^\ell)=D(q)$, for every $1 \leq \ell \leq h-1$.
  \item $M(q\omega)=\omega^{m_{ij}}\,\ M(q)$.
   \item $m_{ij}$ is the multiplicity of $0$ as a root of $M(z)$ and as a zero of $p_{ij}(z)$.
  \end{enumerate} 
     
    \end{lem}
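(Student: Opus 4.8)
The plan is to establish the five items in the order (iii), then (i) and (ii), then (iv), then (v), because (iv) is an immediate consequence of (i) and (iii), and (v) drops out once the $z$-adic expansion of $p_{ij}$ is in hand. For (iii), recall from Remark~\ref{rem_periodic} (and the paragraph following Definition~\ref{def_Schreier-graph}) that $A$ is similar to $\omega A$, say $P^{-1}AP=\omega A$ for an invertible matrix $P$. Then, as polynomials in $z$,
\[
D(z\omega)=\det(I-z\omega A)=\det\!\big(P^{-1}(I-zA)P\big)=\det(I-zA)=D(z),
\]
and iterating yields $D(z\omega^\ell)=D(z)$ for every $\ell$, in particular $D(q\omega^\ell)=D(q)$. (Equivalently, the multiset of eigenvalues of $A$ is invariant under multiplication by $\omega$, so $D(z)=\prod_k(1-z\lambda_k)$ is unchanged under $\lambda_k\mapsto\omega\lambda_k$.)

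Next, for (i) and (ii): on the disk $\{|z|<\tfrac1n\}$ the series $p_{ij}(z)=\sum_{k\ge0}(A^k)_{ij}z^k$ converges absolutely, and by Lemma~\ref{lem-result-ijfcs}(i) the only possibly nonzero terms are those with $k\equiv m_{ij}\pmod h$, so
\[
p_{ij}(z)=\sum_{r=0}^{\infty}(A^{m_{ij}+rh})_{ij}\,z^{m_{ij}+rh},\qquad |z|<\tfrac1n.
\]
Since $|q\omega^\ell|=|q|<\tfrac1n$, I would substitute $z=q\omega^\ell$ and use $\omega^{h}=1$ to get $(\omega^\ell)^{m_{ij}+rh}=(\omega^\ell)^{m_{ij}}$ for every $r$; pulling this constant out of the sum gives $p_{ij}(q\omega^\ell)=(\omega^\ell)^{m_{ij}}p_{ij}(q)$, which is (ii), and (i) is the case $\ell=1$. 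For (iv), I would combine (i) with the $\ell=1$ instance of (iii) (available because $h>1$): as $q$ and $q\omega$ lie inside the radius of convergence, the identity $M(z)=p_{ij}(z)D(z)$ of Theorem~\ref{theo_genfn_stanley} holds there, and hence
\[
M(q\omega)=p_{ij}(q\omega)\,D(q\omega)=\omega^{m_{ij}}p_{ij}(q)\,D(q)=\omega^{m_{ij}}M(q).
\]
Since this holds at infinitely many rational $q$, the relation $M(z\omega)=\omega^{m_{ij}}M(z)$ in fact holds identically as polynomials.

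For (v): by the definition of $m_{ij}$ we have $(A^k)_{ij}=0$ for $k<m_{ij}$ while $(A^{m_{ij}})_{ij}\neq0$, so the displayed expansion shows $p_{ij}$ is holomorphic near $0$ with a zero of order exactly $m_{ij}$ there; in particular $M=p_{ij}D$ is not identically zero. Since $D(0)=\det I=1\neq0$, the factorization $M=p_{ij}D$ forces $0$ to be a root of the polynomial $M$ of the same multiplicity $m_{ij}$. I do not anticipate a genuine obstacle: the lemma is essentially bookkeeping. The only two points that need care are justifying the term-by-term manipulation in (i)--(ii) — which relies on the disk $\{|z|<\tfrac1n\}$ being stable under multiplication by $\omega^\ell$ (true since $|\omega|=1$) together with the sparsity statement Lemma~\ref{lem-result-ijfcs}(i) — and recording that the pointwise relation in (iv) promotes to a polynomial identity, which is the form in which it will be used in the residue computations of the following lemmas.
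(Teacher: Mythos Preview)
Your proof is correct and follows essentially the same route as the paper: the same series expansion from Lemma~\ref{lem-result-ijfcs}(i) for parts (i)--(ii), the same combination of (i) and (iii) for (iv), and the same $z^{m_{ij}}$-factorization for (v). The only cosmetic difference is in (iii), where you argue directly from the similarity $A\sim\omega A$ while the paper writes out the eigenvalue product $D(z)=\prod_\lambda\prod_{r=0}^{h-1}(1-z\lambda\omega^r)$; these are equivalent formulations of the same spectral invariance, and your added remarks (promoting (iv) to a polynomial identity, noting $D(0)=1$ in (v)) are welcome clarifications rather than a different approach.
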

 \begin{proof}
      To shorten, we write $m$ instead of $m_{ij}$.\\
 $(i)$, $(ii)$  For  $\abs{z}<\frac{1}{n}$,    $p_{ij}(z)=\sum\limits_{r=0}^{r=\infty}  (A^{m+hr})_{ij}\,z^{m+hr}$. So,   $p_{ij}(q\omega)=\,\sum\limits_{r=0}^{r=\infty}(A^{m+hr})_{ij}\,q^{m+hr}\omega^{m+hr}=\,\omega^{m}\sum\limits_{r=0}^{r=\infty}(A^{m+hr})_{ij}\,q^{m+hr}=\omega^{m}p_{ij}(q)$, since $\omega^{h}=1$. That is, $p_{ij}(q\omega)=\omega^{m_{ij}}p_{ij}(q)$ and with the same proof: $p_{ij}(q\omega^\ell)\,=\,(\omega^{\ell})^{ m_{ij}}\,\,p_{ij}(q)$.\\
 $(iii)$ Since $h>1$, for any eigenvalue $\lambda\neq 0$ of $A$ with algebraic multiplicity $n_\lambda$,  $\lambda\omega^j$ is an eigenvalue of $A$ and  $1-z\lambda\omega^j $ is an eigenvalue of   $I-zA$ (with same  multiplicity  $n_\lambda$), for every $0 \leq j\leq h-1$. When $0$ is an eigenvalue of $A$, the corresponding eigenvalue of   $I-zA$ is $1$. So,  $D(z)=det(I-zA)\,=\,\prod\limits_{\lambda}\prod\limits_{r=0}^{r=h-1} (1-z\lambda\omega^r)$;  $D(q)=\,\prod\limits_{\lambda}\prod\limits_{r=0}^{r=h-1} (1-q\lambda\omega^r)$ and $D(q\omega^\ell)=\,\prod\limits_{\lambda}\prod\limits_{r=0}^{r=h-1} (1-q\lambda\omega^{\ell+r})$. As $\omega^{h}=1$,  $D(q\omega^\ell)=D(q)$  for every $1 \leq \ell \leq h-1$. \\
 $(iv)$ By definition, $p_{ij}(z)=\frac{M(z)}{D(z)}$, so $M(q\omega)=p_{ij}(q\omega)D(q\omega)=$
 
 $\omega^{m_{ij}}\,p_{ij}(q)D(q)$ from $(i)$ and $(ii)$, that is $M(q\omega)=\omega^{m_{ij}}M(q)$.\\
 $(v)$ From $(i)$, $p_{ij}(z)=\,\sum\limits_{r=0}^{r=\infty}(A^{m+hr})_{ij}\,z^{m+hr}= \,z^m\sum\limits_{r=0}^{r=\infty}(A^{m+hr})_{ij}\,z^{hr}$, that is $m$ the multiplicity of $0$ as a zero of $p_{ij}(z)$ and as a root of $M(z)$.
  
\end{proof}

 From Lemma \ref{lem_value-m-d-p}$(ii)$,   we recover the fact that if  $\frac{1}{n}$ is  a simple pole of $p_{uj}(z)$ then $\frac{1}{n}e^{\frac{2\pi i\ell}{h}}$ is also a simple pole of $p_{ij}(z)$, for every $0 \leq \ell\leq h-1$. Indeed, Lemma \ref{lem_value-m-d-p}$(ii)$ implies 
   $\lim\limits_{z \rightarrow\frac{1}{n}\omega^\ell}\, p_{ij}(z)=\omega^{\ell m_{ij}}\lim\limits_{z \rightarrow \frac{1}{n}}\, p_{ij}(z)$.

    \begin{lem}\label{lem_res-dz}
  Let $p_{ij}(z)=\frac{M(z)}{D(z)}$. Then 
  $Res(\frac{1}{D(z)}, \frac{1}{\lambda} \omega^\ell)=\, \omega^\ell\,  Res(\frac{1}{D(z)}, \frac{1}{\lambda})$,  for every    non-zero eigenvalue $\lambda$  of $A$ and every $1 \leq \ell\leq h-1$. In particular,  $Res(\frac{1}{D(z)}, \frac{1}{n} \omega^\ell)=\, \omega^\ell\,  Res(\frac{1}{D(z)}, \frac{1}{n})$.
    \end{lem}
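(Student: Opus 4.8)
The plan is to use the rotational symmetry $D(\omega^{\ell}z)=D(z)$ that is essentially already contained in Lemma~\ref{lem_value-m-d-p}$(iii)$, and then transport residues across the rotation $z\mapsto\omega^{\ell}z$.

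First I would promote Lemma~\ref{lem_value-m-d-p}$(iii)$ to an identity of polynomials. That lemma shows $D(q\omega^{\ell})=D(q)$ for every rational $q$ with $\abs{q}<\frac1n$; since $D(\omega^{\ell}z)$ and $D(z)$ are polynomials in $z$ agreeing at infinitely many points, they coincide, so $D(\omega^{\ell}z)=D(z)$ for all $z\in\mathbb{C}$ and all $1\le\ell\le h-1$. (Alternatively this is immediate from the factorization $D(z)=\prod_{\lambda}\prod_{r=0}^{h-1}(1-z\lambda\omega^{r})$ recorded in the proof of Lemma~\ref{lem_value-m-d-p}$(iii)$, which is visibly invariant under $z\mapsto\omega^{\ell}z$ since $\omega^{h}=1$; it also reflects the invariance of the spectrum of $A$ under multiplication by $\omega$ noted in Remark~\ref{rem_periodic}.) In particular $\frac1{D(z)}$ is invariant under the rotation $z\mapsto\omega^{\ell}z$, and $\frac1\lambda\omega^{\ell}$ is a pole of $\frac1{D(z)}$ whenever $\frac1\lambda$ is, i.e. whenever $\lambda$ is a non-zero eigenvalue of $A$.

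Next I would evaluate the residue through the change of variables $z=\omega^{\ell}w$. Fix a non-zero eigenvalue $\lambda$ and put $w_{0}=\frac1\lambda$ and $z_{0}=\frac1\lambda\omega^{\ell}$. The map $w\mapsto\omega^{\ell}w$ is a rotation (it has modulus one), hence an orientation-preserving biholomorphism of $\mathbb{C}$ carrying a small positively oriented circle about $w_{0}$ to a small positively oriented circle about $z_{0}$; substituting into the contour-integral definition of the residue (with $dz=\omega^{\ell}\,dw$) gives
\[
Res\!\left(\frac1{D(z)},\,z_{0}\right)\;=\;\omega^{\ell}\,Res\!\left(\frac1{D(\omega^{\ell}w)},\,w_{0}\right)\;=\;\omega^{\ell}\,Res\!\left(\frac1{D(w)},\,w_{0}\right),
\]
the last equality by $D(\omega^{\ell}w)=D(w)$. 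This is precisely $Res\!\left(\frac1{D(z)},\frac1\lambda\omega^{\ell}\right)=\omega^{\ell}\,Res\!\left(\frac1{D(z)},\frac1\lambda\right)$, and taking $\lambda=n$, the Perron-Frobenius eigenvalue, yields the stated special case.

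I do not anticipate a genuine obstacle; the only point requiring care is that the argument must not presuppose that $\frac1\lambda$ is a \emph{simple} pole of $\frac1{D(z)}$ --- the change-of-variables step is valid at a pole of any order, since it only uses that the substitution is conformal near $w_{0}$ with derivative $\omega^{\ell}$. (For $\lambda=n$ the pole is simple by Proposition~\ref{prop-basic-H}$(ii)$, and then one may instead differentiate $D(\omega^{\ell}z)=D(z)$ to obtain $\omega^{\ell}D'(\omega^{\ell}z)=D'(z)$ and use $Res(\frac1{D},z_{0})=\frac1{D'(z_{0})}$ directly.)
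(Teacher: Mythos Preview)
Your proof is correct and takes a genuinely different, more conceptual route than the paper. The paper computes both residues directly from the factorization $D(z)=\prod_{\rho}\prod_{r=0}^{h-1}(1-z\rho\omega^{r})$: it splits off the factors attached to the chosen eigenvalue $\lambda$, writes the remaining product as $d(z)$, checks $d(\frac{1}{\lambda}\omega^{\ell})=d(\frac{1}{\lambda})$, and then verifies the combinatorial identity $\prod_{r\neq h-\ell}(1-\omega^{\ell+r})=\prod_{r=1}^{h-1}(1-\omega^{r})$ to match the two explicit residue formulas. You instead observe once and for all that $D(\omega^{\ell}z)=D(z)$ and then transport the residue across the rotation $z\mapsto\omega^{\ell}z$ via the contour-integral substitution. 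Your argument is shorter, avoids the case-by-case product manipulations, and---as you point out---works verbatim for poles of any order, whereas the paper's limit computation $\lim_{z\to 1/\lambda}(z-\tfrac{1}{\lambda})\tfrac{1}{D(z)}$ tacitly treats $\tfrac{1}{\lambda}$ as a simple zero of $D$ (which is all that is needed for $\lambda=n$, the only case used downstream).
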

    \begin{proof}
 As in the proof of Lemma \ref{lem_value-m-d-p},  $D(z)\,=\,\prod\limits_{\rho}\prod\limits_{r=0}^{r=h-1} (1-z\rho\omega^r)$. So, $\frac{1}{\rho}\omega^r$ is a pole of $\frac{1}{D(z)}$, for every    eigenvalue $\rho\neq 0$  of $A$ and every $0 \leq r\leq h-1$. Consider  $\lambda \neq 0$ a particular eigenvalue of $A$ and let $d(z)=\,\prod\limits_{\rho\neq \lambda}\prod\limits_{r=0}^{r=h-1} (1-z\rho\omega^r)$, so $D(z)=\,\prod\limits_{r=0}^{r=h-1} (1-z\lambda\omega^r)d(z)$.
 It holds that $d(\frac{1}{\lambda})=\,\prod\limits_{\rho\neq \lambda}\prod\limits_{r=0}^{r=h-1} (1-\frac{\rho}{\lambda}\,\omega^r)$ and   $d(\frac{1}{\lambda}\omega^\ell)=\,\prod\limits_{\rho\neq \lambda}\prod\limits_{r=0}^{r=h-1} (1-\frac{\rho}{\lambda}\,\omega^{\ell+r})$. That is,  $d(\frac{1}{\lambda}\omega^\ell)\,=\,d(\frac{1}{\lambda})$, since $w^h=1$. Now, $ Res(\frac{1}{D(z)}, \frac{1}{\lambda})= \lim \limits_{z\rightarrow\frac{1}{\lambda}}(z-\frac{1}{\lambda})\frac{1}{D(z)}\,=\, \lim \limits_{z\rightarrow\frac{1}{\lambda}}(-\frac{1}{\lambda})(1-\lambda z)\frac{1}{D(z)}\,=\,(-\frac{1}{\lambda})\,\frac{1}{d(\frac{1}{\lambda})}\;\;\frac{1}{\prod\limits_{r=1}^{r=h-1} (1-\omega^r)}$
and  $ Res(\frac{1}{D(z)}, \frac{1}{\lambda}\,\omega^\ell)= \lim \limits_{z\rightarrow\frac{1}{\lambda}\omega^\ell}(z-\frac{1}{\lambda}\omega^\ell)\frac{1}{D(z)}\,=\, \lim \limits_{z\rightarrow\frac{1}{\lambda}\omega^\ell}(-\frac{1}{\lambda}\omega^\ell)(1-\lambda\omega^{h-\ell} z)\frac{1}{D(z)}\,=\,\omega^\ell\,(-\frac{1}{\lambda})\,\frac{1}{d(\frac{1}{\lambda}\omega^\ell)}\;\;\frac{1}{\prod\limits_{r\in I} (1-\omega^{\ell+r})}$, where $I= \{ 0\leq r\leq h-1\mid r\neq h-\ell \}$. Since $d(\frac{1}{\lambda}\omega^\ell)\,=\,d(\frac{1}{\lambda})$, it remains to show that $\prod\limits_{r\in I} (1-\omega^{\ell+r})=
      \prod\limits_{r=1}^{r=h-1} (1-\omega^r)$. 
As $w^h=1$,  $\prod\limits_{r\in I} (1-\omega^{\ell+r})= \prod\limits_{r=1}^{r=h-1} (1-\omega^r)$. So, 
 $Res(\frac{1}{D(z)}, \frac{1}{\lambda} \omega^\ell)=\, \omega^\ell\,  Res(\frac{1}{D(z)}, \frac{1}{\lambda})$,  for every    eigenvalue $\lambda \neq 0$  of $A$ and every $1 \leq \ell\leq h-1$ and in particular, for the Perron-Frobenius eigenvalue $n$,  $Res(\frac{1}{D(z)}, \frac{1}{n} \omega^\ell)=\, \omega^\ell\,  Res(\frac{1}{D(z)}, \frac{1}{n})$.
  \end{proof}

   \begin{lem}\label{lem_mz}
 For every  $1 \leq i,j \leq d$,  $p_{ij}(z)=\frac{M(z)}{D(z)}$ satisfies:
      
    \begin{enumerate}[(i)]
    \item   $Res(p_{ij}(z), \frac{1}{n} \omega)= -(\frac{1}{nd})\omega^{m_{ij}+1}$.
    \item  $Res(p_{ij}(z), \frac{1}{n} \omega^\ell)= -(\frac{1}{nd})(\omega^{\ell})^{m_{ij}+1}$.
    \end{enumerate}
      \end{lem}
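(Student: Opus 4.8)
The plan is to derive both statements by simply chaining together the three technical lemmas already in hand: Lemma \ref{lem_res-of-1n} (the value of $Res(p_{ij}(z),\tfrac1n)$), Lemma \ref{lem_value-m-d-p} (the behaviour of $p_{ij}$, $M$ and $D$ under $z\mapsto z\omega^\ell$), and Lemma \ref{lem_res-dz} (the behaviour of $Res(\tfrac{1}{D(z)},\cdot)$ under the same substitution). Since part (ii) contains part (i) as the case $\ell=1$, I would only prove (ii).

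First I would record that $z_0:=\tfrac1n\omega^\ell$ is a simple pole of $p_{ij}(z)$. This is already Prop. \ref{prop-basic-H}(ii), but it also follows directly from the factorization $D(z)=\prod_{\lambda}\prod_{r=0}^{h-1}(1-z\lambda\omega^r)$ used in Lemma \ref{lem_value-m-d-p}(iii): the Perron--Frobenius eigenvalue $n$ is simple, so exactly one factor (the one with $\lambda=n$, $r=h-\ell$) vanishes at $z_0$, and it vanishes to first order, while $M(z_0)\neq 0$. Being a simple pole, $z_0$ satisfies the product formula
\[
Res\bigl(p_{ij}(z),z_0\bigr)=M(z_0)\cdot Res\!\left(\tfrac{1}{D(z)},z_0\right).
\]
Then I would feed in the two symmetries. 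From Lemma \ref{lem_value-m-d-p}(ii)--(iii) one gets $M(q\omega^\ell)=p_{ij}(q\omega^\ell)D(q\omega^\ell)=(\omega^\ell)^{m_{ij}}p_{ij}(q)D(q)=(\omega^\ell)^{m_{ij}}M(q)$ for $|q|<\tfrac1n$, and since $M$ is a polynomial this identity extends to $q=\tfrac1n$, so $M(z_0)=(\omega^\ell)^{m_{ij}}M(\tfrac1n)$; Lemma \ref{lem_res-dz} gives $Res(\tfrac{1}{D(z)},z_0)=\omega^\ell\,Res(\tfrac{1}{D(z)},\tfrac1n)$. Substituting,
\[
Res\bigl(p_{ij}(z),z_0\bigr)=(\omega^\ell)^{m_{ij}+1}\,M(\tfrac1n)\,Res\!\left(\tfrac{1}{D(z)},\tfrac1n\right)=(\omega^\ell)^{m_{ij}+1}\,Res\!\left(p_{ij}(z),\tfrac1n\right),
\]
the last step being the product formula again, now at the simple pole $\tfrac1n$. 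Finally Lemma \ref{lem_res-of-1n} gives $Res(p_{ij}(z),\tfrac1n)=-\tfrac{1}{nd}$, which yields (ii), and $\ell=1$ yields (i).

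There is no genuinely hard step — the substance has already been packaged into the preceding lemmas — so the only thing to watch is the bookkeeping with the powers of $\omega$: making sure Lemma \ref{lem_value-m-d-p}(iv) and Lemma \ref{lem_res-dz} are invoked with matching conventions for which power of $\omega$ appears, and checking that the polynomial identity $M(q\omega^\ell)=(\omega^\ell)^{m_{ij}}M(q)$ may legitimately be evaluated at the boundary point $q=\tfrac1n$. As a sanity check, one can bypass $M$ and $D$ entirely: write $Res(p_{ij}(z),z_0)=\lim_{z\to z_0}(z-z_0)p_{ij}(z)$, put $z=w\omega^\ell$, and use the analytic continuation of the identity in Lemma \ref{lem_value-m-d-p}(ii) to a punctured neighbourhood of $\tfrac1n$ to read off $Res(p_{ij}(z),z_0)=(\omega^\ell)^{m_{ij}+1}Res(p_{ij}(z),\tfrac1n)$ at once; but routing the argument through Lemmas \ref{lem_res-dz} and \ref{lem_res-of-1n} keeps every assertion at the level of an already-proved statement.
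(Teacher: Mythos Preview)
Your proof is correct and follows essentially the same route as the paper: split the residue at the simple pole as $M(z_0)\cdot Res(\tfrac{1}{D(z)},z_0)$, apply Lemma~\ref{lem_value-m-d-p} and Lemma~\ref{lem_res-dz} to pull out the factor $(\omega^\ell)^{m_{ij}+1}$, and finish with Lemma~\ref{lem_res-of-1n}. Your version is slightly more careful in justifying the evaluation of $M(q\omega^\ell)=(\omega^\ell)^{m_{ij}}M(q)$ at $q=\tfrac1n$ and in noting that (i) is the case $\ell=1$ of (ii), but the argument is the same.
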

        \begin{proof}
$(i)$ $ Res(p_{ij}(z) ,\frac{1}{n})=\lim \limits_{z\rightarrow\frac{1}{n}}(z-\frac{1}{n})\frac{M(z)}{D(z)}\,=\,Res(\frac{1}{D(z)}, \frac{1}{n})\,M(\frac{1}{n})$ and 
   $ Res(p_{ij}(z) ,\frac{1}{n}\omega)=\lim \limits_{z\rightarrow\frac{1}{n}\omega}(z-\frac{1}{n}\omega)\frac{M(z)}{D(z)}\,=\,Res(\frac{1}{D(z)}, \frac{1}{n}\omega)\,M(\frac{1}{n}\omega)$. From Lemma \ref{lem_value-m-d-p}$(iv)$,   $M(\frac{1}{n}\omega)=\omega^{m_{ij}}\,M(\frac{1}{n})$  and  from Lemma \ref{lem_res-dz}, $Res(\frac{1}{D(z)}, \frac{1}{n}\omega)=\omega\,Res(\frac{1}{D(z)}, \frac{1}{n})$, so  $ Res(p_{ij}(z), \frac{1}{n}\omega)=\,
\omega^{m_{ij}+1} \, Res(p_{ij}(z), \frac{1}{n})=\, -(\frac{1}{nd})\,\omega^{m_{ij}+1}$, with the last equality  from Lemma \ref{lem_res-of-1n}.\\
 $(ii)$ As in $(i)$,  $ Res(p_{ij}(z), \frac{1}{n}\omega^\ell)=\,
  \omega^\ell Res(\frac{1}{D(z)}, \frac{1}{n})\,(\omega^\ell)^{m_{ij}}M(\frac{1}{n}) =$\\
  $(\omega^\ell)^{m_{ij}+1}  Res(p_{ij}(z), \frac{1}{n})\,=\, -(\frac{1}{nd})(\omega^{\ell})^{m_{ij}+1}$.
  \end{proof}
\begin{ex}\label{ex-residues}
	
	Consider  $\tilde{X}_N$ as described in Figure \ref{fig-4sheeted-period2-not-invertible}. The generating functions are 
	$p_{11}(z)=\frac{1-2z^2}{1-4z^2}$, 
	$p_{12}(z)=p_{14}(z)=\frac{z}{1-4z^2}$,  and  $p_{13}(z)=\frac{2z^2}{1-4z^2}$. 
	For all $1\leq  j \leq 4 $,  $Res(p_{1j}(z), \frac{1}{2})=-\frac{1}{8}$; 
	   $ Res(p_{11}(z),- \frac{1}{2})=\,Res(p_{13}(z),- \frac{1}{2})=\,\frac{1}{8}$ and 
	$ Res(p_{12}(z),- \frac{1}{2})=\,  Res(p_{14}(z),- \frac{1}{2})=\,-\frac{1}{8}$.

\end{ex}
 \section{The HS conjecture as a problem in vanishing sum of roots of unity and convex polygons} 
  Let $F_n$ be the free group on $n \geq 2$ generators. Let $\{H_i\alpha_i\}_{i=1}^{i=s}$ be a coset  partition with $H_i<F_n$ of index $d_i$, $\alpha_i \in F_n$, $1 \leq i \leq s$, and $1<d_1 \leq ...\leq d_s$.  Let $\tilde{X}_{i}$ denote the  Schreier  graph of $H_i$, with  transition matrix   $A_i$ of period $h_i\geq 1$, $1 \leq i\leq s$.  Let  $\tilde{X}_{H_i\alpha_i}$ denote the  Schreier  automaton  of $H_i\alpha_i$, with generating function $p_i(z)$, $1 \leq i\leq s$. Let denote by $m_i$ the minimal natural number such that $(A_i^{m_i})_{k_il_i}\neq 0$, where $k_i$ and $l_i$ denote the initial and final states corresponding to $\tilde{X}_{H_i\alpha_i}$. In this section, we assume $h=max\{h_i \mid 1 \leq i \leq s\}\,>\,1$. We denote  $\omega=e^{\frac{2\pi i}{h}}$ and   $J=\{j \,\mid\, h_j=h, \,1 \leq j\leq s\}$.  Under these conditions, we prove in \cite{chou-davenport}:
    \begin{prop}\cite{chou-davenport} \label{prop_sum_res=0}  Assume $h>1$, where  $h=max\{h_i \mid 1 \leq i \leq s\}$.  Let $J=\{j \,\mid\, 1 \leq j\leq s,\,h_j=h\}$. Then
    there  is a repetition of the maximal period, that is $\mid J \mid \geq 2$. 
     \end{prop}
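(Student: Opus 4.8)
The plan is to derive the bound $|J|\ge 2$ from the global generating-function identity $\sum_{i=1}^{s}p_i(z)=\dfrac{1}{1-nz}$ (valid on $|z|<\tfrac1n$, and hence, since both sides are rational by Theorem \ref{theo_genfn_stanley}, as an identity of rational functions on all of $\mathbb{C}$) by taking residues at the point $z_0=\tfrac1n\omega$, $\omega=e^{2\pi i/h}$. Since $h>1$ we have $\omega\neq 1$, so $z_0\neq\tfrac1n$ and $Res\!\left(\tfrac{1}{1-nz},z_0\right)=0$; therefore $\sum_{i=1}^{s}Res(p_i(z),z_0)=0$.

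Next I would show that the terms with $i\notin J$ drop out, because $z_0$ is not a pole of $p_i(z)$ at all. By Proposition \ref{prop-basic-H}$(i)$ the radius of convergence of $p_i$ is $\tfrac1n=|z_0|$, and by Proposition \ref{prop-basic-H}$(ii)$ the poles of $p_i$ on the circle $|z|=\tfrac1n$ are exactly $\{\tfrac1n e^{2\pi i k/h_i}\mid 0\le k\le h_i-1\}$. As $\omega$ is a primitive $h$-th root of unity while $1\le h_i<h$ for $i\notin J$, $\omega$ is not an $h_i$-th root of unity, so $z_0$ does not belong to that set; hence $Res(p_i(z),z_0)=0$ and we are left with $\sum_{j\in J}Res(p_j(z),z_0)=0$.

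For $j\in J$ the transition matrix $A_j$ has period exactly $h$, so Lemma \ref{lem_mz}$(i)$, applied to the Schreier automaton $\tilde X_{H_j\alpha_j}$ with its initial and final states $k_j,l_j$, gives $Res(p_j(z),z_0)=-\tfrac{1}{n d_j}\,\omega^{m_j+1}$. Substituting this, cancelling the common factor $-\tfrac1n\omega$, and multiplying through by $\prod_{i\in J}d_i$ turns the identity into
\[\sum_{j\in J}\Bigl(\prod_{\substack{i\in J\\ i\neq j}}d_i\Bigr)\,\omega^{m_j}=0,\]
a vanishing sum of $h$-th roots of unity with positive integer coefficients (this is also the first displayed equation of Theorem \ref{theo1}). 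If $|J|=1$ this would read (a positive integer)$\,\cdot\,\omega^{m_{j}}=0$, which is absurd; since $|J|\ge1$ automatically (the maximum $h$ is attained), we conclude $|J|\ge2$.

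The delicate step is the second one: one must be sure that no pole of a lower-period Schreier automaton $\tilde X_{H_i\alpha_i}$, $i\notin J$, accidentally sits at $\tfrac1n\omega$. This is exactly what Proposition \ref{prop-basic-H}$(i)$--$(ii)$ rules out, once combined with the elementary observation that $e^{2\pi i/h}$ has exact multiplicative order $h$ and therefore cannot be an $h_i$-th root of unity when $h_i<h$. (Alternatively, one could invoke the Lam--Leung theorem that a non-empty vanishing sum of $h$-th roots of unity has length at least the smallest prime dividing $h$, which is $\ge2$; but the length-one contradiction above is already enough.)
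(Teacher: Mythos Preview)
Your argument is correct and is precisely the approach the paper itself takes: the identity $\sum_i p_i(z)=\frac{1}{1-nz}$, residues at $\frac{1}{n}\omega$, the vanishing of the $i\notin J$ contributions, and Lemma~\ref{lem_mz} for the $j\in J$ terms are exactly what appear in the introduction and in the proof of Proposition~\ref{prop_vanish}. In this paper Proposition~\ref{prop_sum_res=0} is merely quoted from \cite{chou-davenport} without a self-contained proof, so there is no separate argument here to contrast with; your write-up simply fills in what the paper's machinery already supplies.

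One small point of precision: Proposition~\ref{prop-basic-H}$(ii)$ as stated only says that $\{\tfrac1n e^{2\pi ik/h_i}:0\le k\le h_i-1\}$ \emph{are} simple poles of $p_i$ of minimal modulus, not that they are the \emph{only} poles on the circle $|z|=\tfrac1n$. To conclude that $\tfrac1n\omega$ is genuinely not a pole of $p_i$ for $i\notin J$ you need the Perron--Frobenius statement (Remark~\ref{rem_periodic}, or the discussion following Definition~\ref{def_Schreier-graph}) that $A_i$ has \emph{exactly} $h_i$ eigenvalues of modulus $n$, namely $\{n e^{2\pi ik/h_i}\}$; hence the zeros of $\det(I-zA_i)$ on $|z|=\tfrac1n$ are exactly $\{\tfrac1n e^{2\pi ik/h_i}\}$, and your order argument ($\omega$ has exact order $h>h_i$) then excludes $\tfrac1n\omega$. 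With that citation adjusted, the proof is complete.
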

 \subsection{Proof of Theorem $1$}
  We show that Proposition \ref{prop_sum_res=0} induces a set of  irreducible vanishing sums of roots of unity of order $h$  that describe convex polygons, which is the content of Theorem $1$.  The following proposition is a slightly more extended version of Theorem $1$.
  
   \begin{prop}\label{prop_vanish}
    Assume $h>1$, where  $h=max\{h_i \mid 1 \leq i \leq s\}$.  Let $J=\{j \,\mid\, 1 \leq j\leq s,\,h_j=h\}$.  The  coset partition   $P=\{H_i\alpha_i\}_{i=1}^{i=s}$, $\alpha_i \in F_n$  induces a
vanishing sum of roots of unity of order $h$  
\[\sum\limits_{j\in J}(\prod\limits_{\substack{i\in J \\ i\neq j}}d_i)\,(\omega^{})^{m_{j}}\,=\,0\]
      Furthermore, $P$ induces 
    
\begin{enumerate}[(i)]
	\item    a set of 
	irreducible  vanishing sums of roots of unity of order $h$  of  thefollowing  form, where $J'\subseteq J$ :
	\[ \sum\limits_{j\in J'}(\prod\limits_{\substack{i\in J' \\ i\neq j}}d_i)\,(\omega^{})^{m_{j}}\,=\,0\]    
	\item a set of convex polygons, where each polygon   $\mathcal{P}$ has  $\mid J'\mid$ sides.
	\item each $J'$ satisfies $\mid J'\mid \geq p$, where $p$ is the smallest prime dividing $h$.
\end{enumerate}    
    \end{prop}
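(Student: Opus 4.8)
The plan is to feed the residue identity underlying Proposition~\ref{prop_sum_res=0} into the explicit residue computation of Lemma~\ref{lem_mz}, and then to invoke the standard facts about vanishing sums of roots of unity recalled in Section~2 for the remaining assertions.

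\textbf{Step 1 (the single vanishing sum).} Start from the rational identity $\sum_{i=1}^s p_i(z)=\frac{1}{1-nz}$ that holds because $P$ is a partition (as in \cite{chou-davenport}), together with $Res\big(\frac{1}{1-nz},\frac1n\omega\big)=0$ (the only pole of $\frac{1}{1-nz}$ is $\frac1n\ne\frac1n\omega$, since $h>1$), to get $\sum_{i=1}^s Res\big(p_i(z),\frac1n\omega\big)=0$. If $i\notin J$ then $h_i<h$, and by Proposition~\ref{prop-basic-H}(ii) the poles of $p_i$ of minimal modulus are the $h_i$ points $\frac1n e^{2\pi i k/h_i}$; the point $\frac1n\omega=\frac1n e^{2\pi i/h}$ is among them only if $h_i=kh$ for an integer $k\ge1$, which forces $h_i=h$, a contradiction. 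Hence $Res(p_i(z),\frac1n\omega)=0$ for $i\notin J$, and the identity collapses to $\sum_{j\in J}Res\big(p_j(z),\frac1n\omega\big)=0$. For $j\in J$ we have $h_j=h$, so Lemma~\ref{lem_mz}(i) gives $Res(p_j(z),\frac1n\omega)=-\frac1{nd_j}\omega^{m_j+1}$. Substituting, cancelling the common nonzero factor $-\frac\omega n$, and multiplying through by $\prod_{i\in J}d_i$ yields
\[\sum_{j\in J}\Big(\prod_{\substack{i\in J\\ i\ne j}}d_i\Big)\,\omega^{m_j}=0,\]
a vanishing sum of $h$-th roots of unity with positive integer coefficients; this is the first assertion and the first displayed equation of Theorem~1.

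\textbf{Step 2 (reduction to irreducible sums, part (i)).} I use the elementary fact that any vanishing sum of roots of unity with positive integer coefficients decomposes into irreducible ones supported on disjoint index sets: if the sum is not irreducible, by definition there is a proper non-empty index subset $S$ with $\sum_{j\in S}a_j\zeta_j=0$; then the complementary subsum vanishes as well, and recursing (the number of terms strictly decreases, and no singleton block can occur since $a\zeta\ne0$) partitions the index set into blocks, each an irreducible vanishing subsum. Applied to the sum of Step~1 this partitions $J$ into blocks $J'$ with $\sum_{j\in J'}\big(\prod_{i\in J,\,i\ne j}d_i\big)\omega^{m_j}=0$. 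In each block the factor $\prod_{i\in J\setminus J'}d_i$ is common to all coefficients; dividing it out replaces $\prod_{i\in J,\,i\ne j}d_i$ by $\prod_{i\in J',\,i\ne j}d_i$ (still a positive integer, since every $d_i\ge2$), and scaling a relation by a nonzero constant preserves irreducibility. This produces exactly the asserted family $\sum_{j\in J'}\big(\prod_{i\in J',\,i\ne j}d_i\big)\omega^{m_j}=0$.

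\textbf{Step 3 (polygons and the bound on $\abs{J'}$, parts (ii) and (iii)).} Each relation produced in Step~2 is a vanishing sum of $h$-th roots of unity with positive integer coefficients, so by the polygon interpretation recalled in Section~2 (\cite{mann}) it corresponds to a convex polygon: ordering the plane vectors $\big(\prod_{i\in J',\,i\ne j}d_i\big)\omega^{m_j}$ by argument and laying them end to end produces a closed convex polygon $\mathcal{P}$ with $\abs{J'}$ sides of integer length and exterior angles that are rational in degrees. Finally, $\mathcal{P}$ being a non-empty vanishing sum of $h$-th roots of unity, the Lam--Leung result \cite{lam} gives $\abs{J'}\in\mathbb{N}p_1+\dots+\mathbb{N}p_t$, where $p_1<\dots<p_t$ are the primes dividing $h$; in particular $\abs{J'}\ge p_1=p$, the smallest prime dividing $h$.

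\textbf{Main obstacle.} Given the earlier lemmas (chiefly Lemma~\ref{lem_mz}) and the cited results of Mann and Lam--Leung, no step here is deep — the proposition is essentially a repackaging. The point needing the most care is the bookkeeping in Step~2: verifying that the recursive splitting is legitimate under the paper's precise definition of irreducibility, and, above all, that rescaling each block by $\prod_{i\in J\setminus J'}d_i$ yields exactly the normalized coefficients $\prod_{i\in J',\,i\ne j}d_i$ while keeping them positive integers. One should also pin down in Step~3 why $\mathcal{P}$ has $\abs{J'}$ sides and not fewer, i.e. that the exponents $m_j$ ($j\in J'$) are pairwise distinct modulo $h$ so that no two sides are collinear; this should follow from the absence of proper non-empty vanishing subsums in an irreducible relation together with Mann's structure theorem (Theorem~\ref{thm-mann}).
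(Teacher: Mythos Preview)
Your proof is correct and follows essentially the same route as the paper: feed the residue identity into Lemma~\ref{lem_mz}, clear denominators, split into irreducible subsums, then invoke Mann and Lam--Leung. Your Step~1 is in fact more explicit than the paper's, which simply asserts that $\sum_{j\in J}Res(p_j,\tfrac{1}{n}\omega)=0$ follows from Proposition~\ref{prop-basic-H}(iii) without spelling out why the $i\notin J$ terms drop. One correction to your ``Main obstacle'': the paper does \emph{not} claim the $m_j$ are pairwise distinct modulo $h$; it explicitly allows the polygon $\mathcal{P}$ to be degenerate when $m_{j_1}=m_{j_2}$ for some $j_1,j_2\in J'$, still counting it as having $|J'|$ sides. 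So the concern you raise at the end is moot --- you need not (and cannot, in general) establish non-degeneracy.
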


\begin{proof}
	From Proposition \ref{prop-basic-H}$(iii)$, it results that 
	$\sum\limits_{j\in J}Res(p_{j}(z), \frac{1}{n}w^{\ell})=0$,  for every $\ell$ with $gcd(\ell,h)=1$.  In particular, $\sum\limits_{j\in J}Res(p_{j}(z), \frac{1}{n}w)=0$.  From Lemma \ref{lem_mz}, $\sum\limits_{j\in J}Res(p_{j}(z), \frac{1}{n}w)= -\sum\limits_{j\in J}(\frac{1}{nd_j})(\omega)^{m_{j}+1}$, that is,  after reduction,  we have  $\sum\limits_{j\in J}(\frac{1}{d_j})\,(\omega^{})^{m_{j}}\,=\,0$ and after multiplication by $ \prod\limits_{\substack{j\in J}}d_j$, we have    
	$ \sum\limits_{j\in J}(\prod\limits_{\substack{i\in J \\ i\neq j}}d_i)\,(\omega^{})^{m_{j}}\,=\,0$, a vanishing sum of roots of unity of order $h$ with  positive  integer coefficients. 	\\
$(i)$, $(ii)$ 	If this  vanishing sum of roots of unity of order $h$  is not irreducible, then it admits several proper irreducible  non-empty subsums that vanish. That is,  there exists an irreducible vanishing sum of roots of unity of order $h$,  $\sum\limits_{j\in J'}(\prod\limits_{\substack{i\in J' \\ i\neq j}}d_i)\,(\omega^{})^{m_{j}}\,=\,0$,  for each $J'\subseteq J$,  (obtained   after simplification by  $\prod\limits_{\substack{i\in J\setminus J' }}d_i$).   Since all the coefficients in the vanishing sum are positive integers,  each such irreducible vanishing sum  describes a convex polygon $\mathcal{P}$  with  $\mid J'\mid$ sides \cite{mann}. This polygon may be degenerate, as it  may occur that $m_{j_{1}}= m_{j_{2}} $, for some $j_{1},j_{2}\in J'$. \\
	$(iii)$ From \cite[p.92]{lam}, any non-empty vanishing sum of $h$-th roots of unit must have length at least $p$,  where $p$ is the smallest prime dividing $h$, so $\mid J'\mid \geq p$.
	 \end{proof} 
Note that from Proposition \ref{prop_vanish}, the number of repetitions of the maximal period $h$ is at least $p$, where $p$ is the smallest prime dividing $h$. In particular, in the case of $\mathbb{Z}$, the maximal period is $d_s$, so  we recover the following result  for $\mathbb{Z}$:   the largest index $d_s$ appears at least $p$ times,  where $p$ is the smallest prime dividing $d_s$ \cite{newman,znam,sun2}.
\begin{ex}\label{ex-partition}
Let $L= \langle a^2,b,aba \rangle$ be a subgroup  of index $2$ in $F_2$. 
Let $N$ be the subgroup described in Fig. \ref{fig-4sheeted-period2-not-invertible}.  Consider the following coset partition: $\{H_i\alpha_i\}_{i=1}^{i=3}$, with $H_1\alpha_1=L$, $H_2\alpha_2=Na$, $H_3\alpha_3=Nab$, that is    $F_2=L\cup Na \cup Nab$. 
From Proposition \ref{prop_sum_res=0},  there is a repetition of the maximal period $2$, that is $h_2=h_3=2$.  It holds that $h_1=1$, $m_2=1$, $m_3=2$ and the induced  vanishing sum of roots of order 2 is   $d_3\omega +d_2\omega^2=0$ (since $\frac{1}{d_2}\omega^{m_2} +\frac{1}{d_3}\omega^{m_3}=\frac{1}{d_2}\omega +\frac{1}{d_3}\omega^2=0$),   which implies $d_2=d_3$. Clearly, the induced  polygon here  is degenerate. 
\end{ex}
\subsection{Proof of Theorem 2}
We show now that   a coset partition $P$ has multiplicity if and only if one of the polygons obtained from the induced  vanishing sum have  at least two edges of the same length.

\begin{proof}[Proof of Theorem $2$] 
From Proposition \ref {prop_vanish},  each irreducible non-empty subsum 
  $\sum\limits_{j\in J'}(\prod\limits_{\substack{i\in J' \\ i\neq j}}d_i)\,(\omega^{})^{m_{j}}\,=\,0$, with $J'\subseteq J$,  describes a convex polygon  $\mathcal{P}$   with  $\mid J'\mid$ sides. Moreover,  the length of each edge has the form $\prod\limits_{\substack{i\in J' \\ i\neq j}}d_i$, for some $j \in J'$.
So,  $\mathcal{P}$  has at least two edges of the same length if and only if  $\prod\limits_{\substack{i\in J' \\ i\neq j}}d_i\,=\,\prod\limits_{\substack{i\in J' \\ i\neq k}}d_i$, for some $j ,k\in J'$, $j \neq k$, that is  if and only if  $d_j=d_k$.
\end{proof} 
From the proof of Proposition \ref{prop_vanish},  $\omega$ is a root of the polynomial   $g(z)=\sum\limits_{j\in J'}(\prod\limits_{\substack{i\in J' \\ i\neq j}}d_i)\,(z^{})^{m_{j}}$. As the cyclotomic polynomial of order $h$, $\Phi_h(z)$, is the minimal polynomial of $\omega$  in $\mathbb{Q}(z)$ and  $g(z)$  in $\mathbb{Q}(z)$,  $\Phi_h(z)$  divides  $g(z)$  and every primitive root of unity of order $h$ is a root of    $g(z)$. Clearly, this implies $max\{m_j\mid j \in J'\}$ is at least $\varphi(h)$, the Euler function of $h$.  So, we proved the following corollary:
\begin{cor}\label{cor}
	Let    $g(z)=\sum\limits_{j\in J'}(\prod\limits_{\substack{i\in J' \\ i\neq j}}d_i)\,(z^{})^{m_{j}}$ in $\mathbb{Q}(z)$ as defined above. 
	Then, $\Phi_h(z)$, the  cyclotomic polynomial of order $h$,   divides $g(z)$. Moreover, $max\{m_j\mid j \in J'\}$ is at least $\varphi(h)$, the Euler function of $h$.  
\end{cor}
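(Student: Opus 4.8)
The plan is to read the corollary off directly from the vanishing sum constructed in the proof of Proposition~\ref{prop_vanish}, combined with the single classical input that $\Phi_h(z)$ is the minimal polynomial of $\omega=e^{2\pi i/h}$ over $\mathbb{Q}$ (equivalently, that the $h$-th cyclotomic polynomial is irreducible over $\mathbb{Q}$). First I would recall that in the proof of Proposition~\ref{prop_vanish} we obtained, for each admissible $J'\subseteq J$, the identity $\sum_{j\in J'}(\prod_{i\in J',\,i\neq j}d_i)\,\omega^{m_j}=0$, which is exactly the statement that $g(\omega)=0$ for $g(z)=\sum_{j\in J'}(\prod_{i\in J',\,i\neq j}d_i)\,z^{m_j}$. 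Since each coefficient $\prod_{i\in J',\,i\neq j}d_i$ is a product of integers $d_i>1$, we have $g\in\mathbb{Z}[z]\subseteq\mathbb{Q}[z]$, and because every coefficient is strictly positive and $J'\neq\varnothing$, $g$ is not the zero polynomial.

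Next I would invoke the standard divisibility principle: if $g\in\mathbb{Q}[z]$ vanishes at an algebraic number $\alpha$, then the minimal polynomial of $\alpha$ over $\mathbb{Q}$ divides $g$ in $\mathbb{Q}[z]$. Applying this with $\alpha=\omega$, whose minimal polynomial over $\mathbb{Q}$ is $\Phi_h(z)$, yields $\Phi_h(z)\mid g(z)$ in $\mathbb{Q}[z]$. In particular every root of $\Phi_h$ — that is, every primitive $h$-th root of unity — is a root of $g$, which also recovers the remark made just before the corollary.

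For the degree estimate I would use $\deg\Phi_h=\varphi(h)$: from $\Phi_h\mid g$ and $g\neq 0$ we get $\deg g\geq\varphi(h)$. It then remains to identify $\deg g$ with $\max\{m_j\mid j\in J'\}$. Collecting terms of equal exponent, the coefficient of $z^m$ in $g$ equals $\sum_{j\in J',\,m_j=m}(\prod_{i\in J',\,i\neq j}d_i)$, a sum of positive integers over a (possibly non-singleton) set, hence strictly positive whenever that set is nonempty. So no cancellation can take place, $\deg g=\max\{m_j\mid j\in J'\}$, and therefore $\max\{m_j\mid j\in J'\}\geq\varphi(h)$, as claimed.

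I do not expect a genuine obstacle here: the argument is a clean application of the minimal-polynomial/divisibility fact, and the corollary is essentially a repackaging of Proposition~\ref{prop_vanish}. The only point requiring a moment of care is the last one — that the ``leading coefficient'' of $g$ is genuinely nonzero — which for an arbitrary $\mathbb{Z}$-linear vanishing sum could fail, but here cannot, precisely because all the coefficients $\prod_{i\neq j}d_i$ are positive and so no terms of the same degree can cancel. As a consistency check one can note that in the prime-power and two-prime cases of Theorem~\ref{lam} the irreducible sum is a rotation of $1+\zeta_p+\dots+\zeta_p^{\,p-1}=0$, whose normalized $g$ has degree $p-1=\varphi(p)$, matching the bound.
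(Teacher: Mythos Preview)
Your proof is correct and follows the same route as the paper: from $g(\omega)=0$ and the fact that $\Phi_h$ is the minimal polynomial of $\omega$ over $\mathbb{Q}$, deduce $\Phi_h\mid g$, then compare degrees to get $\max\{m_j\}\geq\varphi(h)$. Your additional care in checking that $g\neq 0$ and that $\deg g=\max\{m_j\}$ via positivity of the coefficients is a point the paper leaves implicit under ``Clearly''.
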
   

\subsection{Proof of Theorem 3}
Using the above construction,  and general  results on  irreducible vanishing sum of roots of unity of order $h$ , we prove, under certain conditions on  $h$, that  all the associated polygons are regular and this implies the  coset partition has multiplicity.

\begin{proof}[Proof of Theorem $3$] 
		Let $J=\{j \,\mid\, 1 \leq j\leq s,\,h_j=h\}$. Let $\sum\limits_{j\in J}(\prod\limits_{\substack{i\in J \\ i\neq j}}d_i)\,(\omega^{})^{m_{j}}\,=\,0$      be the  induced  vanishing sum of roots of unity of order $h$ 	 induced by $P$,  and $\sum\limits_{j\in J'}(\prod\limits_{\substack{i\in J' \\ i\neq j}}d_i)\,(\omega^{})^{m_{j}}\,=\,0$, where  $J' \subseteq J$, an   irreducible sub-sum. We consider  its induced polygon $\mathcal{P}$,  and apply Theorem \ref{lam}.
Indeed,  if $h=p^n$,  where $p$ is a  prime,  then, up to a rotation, the only irreducible vanishing sums of roots of unity are $1+\zeta_p+...+\zeta_p^{p-1}=0$,  where $\zeta_p$ is  the root of unity of order $p$. That is,  all the edges of $\mathcal{P}$ have equal length and from Theorem $2$, $P$ has multiplicity.  If $h=p^nq^m$, where $p,q$ are primes, then, up to a rotation, the only irreducible vanishing sums of roots of unity are $1+\zeta_p+...+\zeta_p^{p-1}=0$ and  $1+\zeta_q+...+\zeta_q^{q-1}=0$, where $\zeta_p$ and $\zeta_q$ are the roots of unity of order $p$ and $q$ respectively.  That is,  all the edges of $\mathcal{P}$ have equal length. In both  cases, all the  induced polygons  are  regular.
\end{proof}
In Example \ref{ex-partition}, $h=2$ and the induced  vanishing sum of roots of order 2,   $d_3\omega +d_2\omega^2=0$, is simply 
$1+\zeta_2$ times an integer  number ($d_2=d_3$).

Note that	if $h=p_1^{n_1}p_2^{n_2}...p_t^{n_t}$, where $p_1,p_2,...,p_t$, $t>2$, are different primes, then, from Theorem \ref{lam},  an   irreducible  vanishing sum of roots of unity of order $h$  can be obtained from 	$\sum\limits_{i=1}^{i=t}\,(\sum\limits_{k=1}^{k=c_i}\epsilon_{i,k}\,\xi_{i,k})\,(1+\zeta_{p_i}+\zeta^2_{p_i}+...+\zeta^{p_i-1}_{p_i})\,=\,0$, 
where $\epsilon_{i,k} \in \{ -1,0,1 \}$, $\xi_{i,k}$ is any root of unity,  $\zeta_{p_i}$ is the  root of unity of order $p_i$, $1 \leq i \leq t$,  $1 \leq k \leq c_i$. In this case, we do not know how to prove that at least one of the polygons associated to the irreducible  vanishing sums of roots of unity has at least two sides of  the same length.


\bigskip\bigskip\noindent
{ Fabienne Chouraqui,}

\smallskip\noindent
University of Haifa at Oranim, Israel.
\smallskip\noindent
E-mail: {\tt fabienne.chouraqui@gmail.com} {\tt fchoura@sci.haifa.ac.il}

\end{document}